\newtheorem{thm}{Theorem}[section]
\newtheorem*{thm*}{Theorem}
\newtheorem{prop}[thm]{Proposition}
\newtheorem{cor}[thm]{Corollary}
\DeclareMathOperator\vol{vol}
\DeclareMathOperator\Ric{Ric}
\DeclareMathOperator\Vol{Vol}
\def\d{{\rm d}}
\DeclareMathOperator\tr{tr}
\def\C{\mathbb{C}}
\def\CP{\mathbb{CP}}
\def\Sph{\mathbb{S}}
\def\R{\mathbb{R}}
\def\Z{\mathbb{Z}}
\def\T{\mathbb{T}}
\begin{document}

\title{Pluri-subharmonic functions on complex tori, Ricci curvature and convexity}

\author{Tommaso Pacini\footnote{University of Torino, Italy,
\texttt{tommaso.pacini@unito.it}}}


\maketitle

\begin{abstract}
We show that, in toric manifolds, one can characterize the sign of the Ricci curvature in terms of the convexity of the volume functional. More generally we discuss relationships between (i) Ricci curvature and volume, (ii) totally real and Lagrangian submanifolds, (iii) pluri-subharmonic functions and convexity.
\end{abstract}

\section{Introduction}
The following result, proved in this paper, provides a good sample of the topics we are interested in.

\begin{thm*}
Let $M$ be a smooth toric variety endowed with a K\"ahler structure such that the $\T^n$-action is Hamiltonian. Consider the volume functional $\Vol$ restricted to the set of non-degenerate $\T^n$-orbits. Then:
\begin{itemize}
\item $\Ric<0$ iff $\log\Vol$ is strictly convex. This implies $\Vol$ is strictly convex. Such manifolds cannot be compact.
\item $\Ric>0$ iff $-\log\Vol$ is strictly convex. This implies $1/\Vol$ is strictly convex. If $M$ is compact then $\Vol$ has a unique critical point, corresponding to a global maximum. It is a minimal Lagrangian $n$-torus.
\end{itemize}
\end{thm*}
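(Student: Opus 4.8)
The plan is to reduce everything to the open, dense orbit and work in the logarithmic (complex action--angle) coordinates of toric K\"ahler geometry. On the open orbit we have coordinates $w_j=\xi_j+i\theta_j$ with $\theta\in\T^n$, the $\T^n$-invariant K\"ahler potential is a strictly convex function $\phi(\xi)$, and the metric takes the form $g=\sum_{j,k}\phi_{jk}\,(d\xi_j\,d\xi_k+d\theta_j\,d\theta_k)$ with $\phi_{jk}=\partial^2\phi/\partial\xi_j\partial\xi_k$. The non-degenerate orbits are exactly the flat tori $\{\xi=\mathrm{const}\}$, so their volume is $\Vol(\xi)=(2\pi)^n\sqrt{\det\mathrm{Hess}_\xi\phi}$. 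I would first record this formula, together with the fact that every such orbit is Lagrangian: the generators of the $\T^n$-action are the Hamiltonian fields of the moment map components, which Poisson-commute, so the orbits are isotropic of half dimension. Thus the word ``Lagrangian'' in the conclusion is automatic, and the real content is the sign of $\Ric$, the convexity of $\Vol$, and minimality.

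The heart of the argument is a single identity. Since $\det(g_{j\bar k})=4^{-n}\det\mathrm{Hess}_\xi\phi$, the Ricci form is
\[
\rho=-i\partial\bar\partial\log\det(g_{j\bar k})=-2\,i\partial\bar\partial\log\Vol.
\]
For a $\T^n$-invariant function $f(\xi)$ one has $i\partial\bar\partial f=\tfrac{i}{4}\sum_{j,k}f_{jk}\,dw_j\w d\bar w_k$, so $f$ is (strictly) pluri-subharmonic precisely when it is (strictly) convex in the coordinates $\xi$. Reading the identity through this dictionary gives $\Ric<0\Leftrightarrow\rho<0\Leftrightarrow\log\Vol$ strictly convex, and $\Ric>0\Leftrightarrow\rho>0\Leftrightarrow-\log\Vol$ strictly convex, where convexity is meant with respect to the affine structure supplied by $\xi$. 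The two stated corollaries are then formal: if $\log\Vol$ is convex then so is $\Vol=\exp(\log\Vol)$, and if $-\log\Vol$ is convex then so is $1/\Vol=\exp(-\log\Vol)$, since $\exp$ is convex and increasing. For non-compactness, note that on a compact toric manifold the orbit space is the compact moment polytope, $\Vol$ is positive in the interior and tends to $0$ as the orbit degenerates on the boundary; such a function must attain an interior maximum, which a strictly convex function cannot. Hence $\Ric<0$ forces non-compactness.

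For the compact $\Ric>0$ case I would use properness. In logarithmic coordinates the space of non-degenerate orbits is all of $\R^n$, and the Guillemin boundary behaviour of $\phi$ makes $\Vol\to0$, hence $-\log\Vol\to+\infty$, as $|\xi|\to\infty$. A strictly convex, coercive function on $\R^n$ has a unique critical point, its global minimum; translating back, $\Vol$ has a unique critical orbit, a global maximum. It remains to see that this orbit $\mathcal O$ is minimal. The orbit family is foliated by the tori $\{\xi=\mathrm{const}\}$, whose normal spaces are spanned by the $\T^n$-invariant fields $\partial_{\xi_j}$, so moving $\xi$ in the direction $\delta\xi$ is a normal variation $V=\sum_j\delta\xi_j\,\partial_{\xi_j}$. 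By the first variation formula $\delta\Vol=-\int_{\mathcal O}\langle H,V\rangle$, and since both $H$ and $V$ are $\T^n$-invariant the integrand is constant along $\mathcal O$; vanishing of $\delta\Vol$ for every $\delta\xi$ forces $\langle H,\partial_{\xi_j}\rangle=0$ for all $j$, hence $H=0$. So $\mathcal O$ is a minimal Lagrangian $n$-torus.

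The main obstacle, I expect, is conceptual rather than computational: fixing the correct affine structure on the orbit space. Convexity of $\log\Vol$ holds in the logarithmic coordinates $\xi$ but not, in general, in the moment coordinates $x$, since the two are related by the nonlinear Legendre transform of $\phi$; one must therefore be careful that the identity $\rho=-2\,i\partial\bar\partial\log\Vol$ is read in the coordinates in which $i\partial\bar\partial$ becomes an honest Hessian. The second delicate point is the passage from ``critical orbit'' to ``minimal submanifold'', which relies on the orbit-family variations spanning the entire normal bundle together with the $\T^n$-invariance of the mean curvature, and on the Guillemin boundary conditions to guarantee both coercivity and that no critical orbit escapes to the boundary.
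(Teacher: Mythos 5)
Your proof is correct, and it rests on the same core identity as the paper's, but it reaches it by a more self-contained route, so a comparison is worth recording. The paper proves this statement as Theorem \ref{thm:Lagr_convexity} plus Corollary \ref{cor:toric_convexity}, both deduced from Theorem \ref{thm:TR_convexity}: there the central object is the norm function $H=h(\sigma,\sigma)$ of the holomorphic section $\sigma=\partial\theta_1\w\dots\w\partial\theta_n$ of $K_M^{-1}$, whose square root is simultaneously the Ricci potential and the density of the $J$-volume of the totally real $\T^n$-orbits; the Hamiltonian hypothesis enters only at the end, to make the orbits Lagrangian so that $\Vol_J=\Vol$, and in the non-invariant case convexity comes from the torus-averaging of Proposition \ref{prop:G}. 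You instead invoke the Guillemin--Abreu normal form at the outset: Hamiltonicity forces Lagrangian orbits, hence a global invariant potential $\phi(\xi)$ on the open orbit (a step worth one line: $d\mu_j(\partial_{\theta_i})=0$ plus $J$-compatibility kills all but the $d\xi\w d\theta$ components and symmetrizes $\partial_{\xi_i}\mu_j$), and $\Vol=(2\pi)^n\sqrt{\det\mathrm{Hess}\,\phi}$ makes $\rho=-2i\partial\bar\partial\log\Vol$ a two-line computation. Your $\phi_{jk}$ is the paper's $H$ in disguise, since $h(\partial\theta_i,\partial\theta_j)=g(\partial\theta_i,\partial\theta_j)=\phi_{ij}$ on Lagrangian orbits, and since everything is invariant, averaging is unnecessary and PSH is pointwise equivalent to convexity --- exactly how the paper treats the invariant case inside Theorem \ref{thm:TR_convexity}. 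The trade-off: the paper's route also yields the non-invariant statement ($\rho\leq0$ alone gives convexity of $\Vol_J$) and the totally-real interpretation, while yours is more elementary. Your endgame substitutions are sound: the first-variation argument (invariant mean curvature paired against the normal fields $\partial_{\xi_j}$, which span the normal bundle of a Lagrangian orbit) is precisely the content of the Palais symmetric-criticality principle the paper cites, and your non-compactness argument for $\Ric<0$ is essentially the paper's alternative proof of Proposition \ref{prop:nonpositive_toric}. The one point to tighten is the decay claim underlying both coercivity and non-compactness: you need $\Vol\to0$ along every ray $|\xi|\to\infty$, which holds because $\nabla\phi$ is a homeomorphism onto the interior of the moment polytope, so escaping in $\xi$ pushes the moment image to the boundary, where orbits collapse; the paper handles the same point via dominated convergence applied to the bounded density $H$.
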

This statement summarizes the main contents of Section \ref{s:symplecticgeometry}. The assumptions regarding symmetries and strict curvature bounds can be relaxed, as in Section \nolinebreak \ref{s:vol_convexity}. 

From our point of view, an interesting aspect of this theorem is that it interweaves several different threads of geometry. The most obvious is the well-known relationship between Ricci curvature and volume: this is already apparent in Riemannian geometry, but it takes special form in the K\"ahler context via a certain relationship between the Ricci curvature and the anti-canonical bundle. A second, more novel, thread concerns a link between the anti-canonical bundle and submanifold geometry. More specifically, the theorem provides the perfect setting for reflecting on a string of relationships between:
\begin{enumerate}
\item Lagrangian and totally real (TR) submanifolds.
\item TR submanifolds and the anti-canonical line bundle $K_M^{-1}$. 
\item The curvature of $K_M^{-1}$ and pluri-subharmonic (PSH) functions.
\item PSH functions and convexity. 
\end{enumerate}
The concatenation of these elements, starting with Lagrangians and ending with convexity, can be taken as an outline of the proof of the theorem. 

We wish to emphasize the following aspects of this train of thought.

\paragraph{Complex tori and PSH functions.}The keystone of this network is yet another protagonist in geometry and beyond: the complex Lie group $(\C^*)^n$. This group can be seen from several viewpoints. It is the heart of toric geometry, in all its versions: complex, K\"ahler, symplectic. Covering maps relate it to $\C^n$ and to complex tori. By thinking of it as the complexification of the real torus $\T^n$, we can also view it as a $\T^n$-fibre bundle. 

Although all of the above will be relevant, the starting point for this paper is a more analytic viewpoint: specifically, a simple, abstract, correspondence between pluri-subharmonic (PSH) functions $g$ on $(\C^*)^n$ and convex functions $G$ on the base space of this $\T^n$-bundle. An important part of our reasoning is the observation that, in the context of K\"ahler geometry, this analytic correspondence admits a geometric reformulation. Indeed, let $g$ denote the potential of the Ricci 2-form on $(\C^*)^n$ determined by any K\"ahler metric $\omega$. The Ricci-volume relationship then implies that $g$ is closely related to the density function of the volume form, so that $G$ coincides with a certain volume functional on the space of $\T^n$-orbits. It follows that we can control the PSH condition for $g$ via the sign of the Ricci curvature, and in turn this controls the convexity of $G$.

\ 

\noindent\textit{Example. }The simplest example of this relationship occurs when $n=1$ and $\C^*$ compactifies to the toric variety $\CP^1$. The Ricci 2-form then coincides with the standard curvature form $k\omega=k\vol$, while $G$ encodes the length of the circles of constant latitude, ie those parallel to the equator. The condition $k>0$ corresponds to the obvious fact that $G$, apppropriately parametrized, is concave. The condition $k\leq 0$, impossible by Gauss-Bonnet, can be alternatively ruled out by the simple fact that it would imply that $G$ is a positive convex function which, thanks to the compactification, tends to zero at both extremes.

\paragraph{Complex vs symplectic data.}K\"ahler geometry blends three types of data: complex, symplectic and Riemannian. It is sometimes useful to give priority to one over the others. In our case it is thus important to notice that, for $n>1$, the $\T^n$-orbits in $(\C^*)^n$ are TR submanifolds. Classically, these submanifolds were of interest only as a tool in complex function theory, but our recent work, cf. eg \cite{LP1}, \cite{LP2}, has highlighted the existence of a specific geometry of TR submanifolds, including a canonical volume functional (distinct from the standard volume). It is this volume functional which is encoded by the function $G$, and our convexity results here are a concrete manifestation of the abstract picture presented in \cite{LP2}. The Lagrangian submanifolds discussed in the main theorem are a special case but the TR viewpoint, in the proof, is foundational to making the link with the theory of PSH functions.

\paragraph{Coordinate systems.}Convexity is one of the most efficient ways of proving existence/uniqueness results for critical points of a function. It also provides a clear geometric explanation for such results. Notice however an important discrepancy. Critical points are independent of a choice of coordinates. The notion of convexity depends instead on a preliminary choice of a family of parametrized curves. If the family arises as the lines defined by a system of coordinates, convexity will then depend strongly on that specific choice. This is already apparent in one dimension: the function $f(x):=e^x$ is no longer convex after changing coordinates via $x=\log(\log t)$ (restricted to $t>1$). 

The choice of complex vs symplectic (action-angle) coordinates, for toric manifolds, has already been discussed in the literature \cite{Abreu} and falls within the framework of the previous paragraph. Our arguments rely on complex cooordinate systems.

\ 

The paper is structured according to the following flow-chart:

\begin{itemize}
\item Section \ref{s:PSH} - Section \ref{s:vol_convexity} - Section \ref{s:symplecticgeometry}.

These introduce the relationship between PSH functions and convexity, then focus on PSH functions obtained as the potential of the curvature of $K_M^{-1}$ explaining the relationship with volume functionals, then prove the theorem.
\item Section \ref{s:complexanalysis} - Section \ref{s:linebundles} - Section \ref{s:toric}.

These can largely be taken as digressions of independent interest, which serve to enlarge the general picture.
\end{itemize}
\noindent\textit{Acknowledgements. }We wish to thank S. Diverio for an interesting conversation. The results contained in Sections \ref{s:PSH}, \ref{s:complexanalysis} are mostly classical, cf. also \cite{Demailly}, but it seems worthwhile to advertise them more widely.

\section{PSH functions and convexity}\label{s:PSH}
Let $J$ denote the standard complex multiplication by $i$ on $\C^n$. Recall that, for any $f:\C^n\rightarrow\R$ with regularity $C^2$, the 2-form $i\partial\bar\partial f$ is real-valued. We obtain a $J$-invariant symmetric 2-tensor by setting
\begin{equation*}
L_f[p](\cdot,\cdot):=i\partial\bar\partial f[p](\cdot,J\cdot).
\end{equation*}
Assume $f$ is pluri-subharmonic (PSH). This means that $i\partial\bar\partial f\geq 0$, in the sense that $L_f[p]$ is positive semi-definite, ie $L_f[p](v,v)\geq 0$, for all $p\in\C^n$ and $v\in\C^n$.

Let us write $z_i=x_i+iy_i$ and consider, at any point $p\in\C^n$, the basis $\{\partial x_1,\dots,\partial x_n,\partial y_1,\dots,\partial y_n\}$. The symmetric matrix associated to $L_f[p]$ can be written in the form
\begin{equation*}
L_f[p]=\frac{1}{2}\left(\begin{array}{cc}
A&B^t\\
B&A
\end{array}\right),
\end{equation*}
for appropriate matrices $A,B$ with $A$ symmetric. 

An important special case is when $f=f(x)$ is independent of the variables $y$. In this case $A=\mbox{Hess}_x(f)$ and $B=0$, so PSH is equivalent to convexity.

\ 

\noindent\textit{Example. }Let us illustrate the above in the case $n=2$, using the notation $(z,w)\in\C^2$ with $z=x+iy$ and $w=\xi+i\eta$. A straightforward computation then shows that, with respect to the basis $\{\partial x,\partial \xi,\partial y,\partial \eta\}$,
\begin{equation*}
L_f[p]=\frac{1}{2}\left(\begin{array}{cccc}
f_{xx}+f_{yy}&f_{x\xi}+f_{y\eta}&0&-f_{x\eta}+f_{y\xi}\\
f_{x\xi}+f_{y\eta}&f_{\xi\xi}+f_{\eta\eta}&f_{x\eta}-f_{y\xi}&0\\
0&f_{x\eta}-f_{y\xi}&f_{xx}+f_{yy}&f_{x\xi}+f_{y\eta}\\
-f_{x\eta}+f_{y\xi}&0&f_{x\xi}+f_{y\eta}&f_{\xi\xi}+f_{\eta\eta}
\end{array}\right).
\end{equation*}
This implies $\tr(L_f)=\Delta f$.

\ 

We now introduce a more general relationship between PSH and convexity. Assume that $f$ is periodic with respect to the $y$ variables, in the sense that it is well-defined on $\C^n/2\pi i\,\Z^n$. We will refer to the submanifolds $L=L(y)$ defined by the condition that each $x_i$ is constant as the \textit{canonical $n$-tori}. 

Consider the function $F:\R^n\rightarrow\R$ defined by integrating along these $n$-tori:
\begin{equation*}
F(x):=\frac{1}{(2\pi)^n}\iint f(x,y)\,dy.
\end{equation*}

\begin{prop}\label{prop:F} 
If $f$ is PSH on $\C^n/2\pi i\,\Z^n$ then $F$ is convex.
\end{prop}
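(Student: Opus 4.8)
The plan is to show directly that the Euclidean Hessian $\mbox{Hess}_x(F)$ is positive semi-definite at every point, which is precisely convexity of $F$. Since $F$ is obtained by integrating $f$ over the canonical tori $L(y)$, which are compact, I would first differentiate under the integral sign (legitimate as $f$ is $C^2$) to obtain, for all $j,k$,
\begin{equation*}
\partial_{x_j}\partial_{x_k}F(x)=\frac{1}{(2\pi)^n}\iint \partial_{x_j}\partial_{x_k}f(x,y)\,dy.
\end{equation*}
The integrand here involves only the pure $x$-derivatives of $f$, whereas the PSH hypothesis is a statement about the full tensor $L_f$. The core of the argument is to bridge this gap.

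The bridge is the block structure recorded above. Extending the $n=2$ computation to general $n$, the diagonal block $A$ of $L_f$ has entries $A_{jk}=f_{x_jx_k}+f_{y_jy_k}$, so that $\partial_{x_j}\partial_{x_k}f = A_{jk}-f_{y_jy_k}$. Now I would invoke periodicity: because $f$ is well-defined on $\C^n/2\pi i\,\Z^n$, the function $\partial_{y_k}f$ is periodic in $y$, so integrating the total derivative $f_{y_jy_k}=\partial_{y_j}(\partial_{y_k}f)$ over the full $y$-torus gives zero. Hence the second-fundamental terms drop out and
\begin{equation*}
\mbox{Hess}_x(F)_{jk}=\frac{1}{(2\pi)^n}\iint A_{jk}(x,y)\,dy.
\end{equation*}

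It remains to control the sign of the right-hand side. The PSH condition says precisely that the symmetric matrix $L_f[p]$ is positive semi-definite at every $p$; restricting to vectors of the form $(v,0)$ shows that its principal block $\tfrac12 A[p]$, and hence $A[p]$, is positive semi-definite for every $p$. A pointwise integral of positive semi-definite matrices is again positive semi-definite, so $\mbox{Hess}_x(F)$ is positive semi-definite and $F$ is convex. Equivalently, testing against a fixed direction $v\in\R^n$, the second derivative of $t\mapsto F(x+tv)$ equals $(2\pi)^{-n}\iint A[(x+tv,y)](v,v)\,dy\geq0$.

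I expect the only genuine subtlety to be the vanishing of the $\iint f_{y_jy_k}\,dy$ terms: this is exactly where periodicity is essential, and it is what allows the mixed $x$- and $y$-dependence of the diagonal block to collapse onto the pure $x$-convexity condition. Everything else reduces to differentiation under the integral sign together with the standard fact that principal submatrices of a positive semi-definite matrix inherit positivity.
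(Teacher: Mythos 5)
Your proof is correct and follows essentially the same route as the paper's own: the paper likewise differentiates under the integral sign, observes that every entry of $A-\mbox{Hess}_x(f)$ is a pure $y$-derivative and therefore vanishes when integrated over the torus by periodicity, and concludes by restricting the positive semi-definite form $L_f$ to the $x$-directions (your vectors $(v,0)$) to obtain $A\geq 0$ pointwise. You have simply made the same argument more explicit, in particular by writing out the entries $A_{jk}=f_{x_jx_k}+f_{y_jy_k}$; no gap.
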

\begin{proof}
The point is that each term in $A-\mbox{Hess}_x(f)$ contains a $y$-derivative, so it disappears under integration. It follows that
\begin{equation*}
\mbox{Hess}(F)=\frac{1}{(2\pi)^n}\iint \mbox{Hess}_x(f)\,dy=\frac{1}{(2\pi)^n}\iint A\,dy.
\end{equation*}
Since restriction preserves semi-positivity, $A$ is semi-positive with respect to $x$-directions so $\mbox{Hess}(F)\geq 0$.
\end{proof}

Notice that when $f=f(x)$, ie $f$ is the pull-back of some $F$, we recover the simpler equivalence of PSH and convexity. 

Roughly speaking, this construction defines a surjective linear map
\begin{equation*}
\{PSH\}\rightarrow\{Convex\}, \ \ f\mapsto F
\end{equation*}
with kernel given by the subspace of PSH functions $f$ which integrate to zero along each canonical $n$-torus and a right-inverse map given by pull-back. We will obtain a better description of this kernel in the next section.

\ 

The construction can also be reformulated as follows. Given a function $g:(\C^*)^n\rightarrow\R$, consider the integral average function $G:(0,\infty)^n\rightarrow\R$ defined by integrating along the canonical $n$-tori $L=L(\theta)$ defined by the condition that each $r_i$ is constant:
\begin{align*}
G(r)&:=\frac{1}{\Pi_{i=1}^n (2\pi r_i)}\iint g(r_1e^{i\theta_1},\dots,r_ne^{i\theta_n})d\sigma\\
&=\frac{1}{(2\pi)^n}\iint g(r_1e^{i\theta_1},\dots,r_ne^{i\theta_n})d\theta,
\end{align*}
where $d\sigma$ is the natural volume element on the $n$-torus of radius $(r_1,\dots,r_n)$.

\begin{prop}\label{prop:G} 
If $g$ is PSH on $(\C^*)^n$ then $G$ is convex with respect to the variables $\log r_i$, ie $G\circ\exp:\R^n\rightarrow\R$ is convex with respect to the variable $x\in\R^n$.
\end{prop}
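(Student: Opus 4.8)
The plan is to reduce the statement to Proposition \ref{prop:F} by pulling everything back along the exponential covering map. Concretely, I would consider the holomorphic covering
\[
\pi:\C^n/2\pi i\,\Z^n\rightarrow(\C^*)^n,\qquad (z_1,\dots,z_n)\mapsto(e^{z_1},\dots,e^{z_n}),
\]
and write $z_j=x_j+iy_j$, so that under $\pi$ one has $r_j=e^{x_j}$ and $\theta_j=y_j\ (\mathrm{mod}\ 2\pi)$. Setting $f:=g\circ\pi$, the function $f$ is by construction $2\pi$-periodic in the $y$ variables, hence well-defined on $\C^n/2\pi i\,\Z^n$; thus it is an admissible input for Proposition \ref{prop:F}. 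Under this dictionary the logarithmic coordinates $x_j=\log r_j$ are precisely the flat coordinates on the covering space, which is what will turn ordinary convexity in $x$ into convexity of $G$ in the variables $\log r_i$.

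First I would verify that $f$ is PSH. Since $\pi$ is holomorphic its differential is complex-linear, so $d\pi\circ J=J\circ d\pi$, and pullback commutes with $\partial\bar\partial$. Hence for every $p$ and every tangent vector $v$,
\[
L_f[p](v,v)=i\partial\bar\partial f[p](v,Jv)=i\partial\bar\partial g[\pi(p)](d\pi\,v,J\,d\pi\,v)=L_g[\pi(p)](d\pi\,v,d\pi\,v)\geq 0,
\]
so $g$ PSH on $(\C^*)^n$ forces $f$ PSH on $\C^n/2\pi i\,\Z^n$.

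Next I would match the two averaging operations. Using the second (simplified) form of the definition of $G$ and substituting $r_j=e^{x_j}$, $\theta_j=y_j$, one gets
\[
G(e^{x_1},\dots,e^{x_n})=\frac{1}{(2\pi)^n}\iint g(e^{x_1+iy_1},\dots,e^{x_n+iy_n})\,dy=\frac{1}{(2\pi)^n}\iint f(x,y)\,dy=F(x),
\]
that is, $G\circ\exp=F$ as functions on $\R^n$. Applying Proposition \ref{prop:F} to the PSH function $f$ shows that $F$ is convex, and therefore $G\circ\exp$ is convex in $x$, which is exactly the claim.

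The computations here are routine; the only point requiring genuine care is the invariance of the PSH condition under the covering $\pi$, carried out in the displayed identity above. This is standard, since PSH is a local, holomorphically invariant property, but it is the conceptual crux of the argument: it is precisely the passage to the logarithmic coordinates $x_j=\log r_j$ that explains the appearance of the logarithm in the statement.
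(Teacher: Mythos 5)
Your proposal is correct and follows essentially the same route as the paper: pull $g$ back under the exponential map (which the paper views as a biholomorphism $\C^n/2\pi i\,\Z^n\rightarrow(\C^*)^n$, so your covering-map formulation descends to the same thing), use the holomorphic invariance of the PSH condition to conclude $f:=g\circ\exp$ is PSH, identify $F=G\circ\exp$ by the substitution $r_j=e^{x_j}$, $\theta_j=y_j$, and invoke Proposition \ref{prop:F}. Your explicit verification of the PSH invariance via $L_f[p](v,v)=L_g[\pi(p)](d\pi\,v,d\pi\,v)$ is a welcome elaboration of a step the paper states without computation, but it is the same argument.
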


\begin{proof} The point is that the PSH condition is invariant under biholomorphisms. The exponential map gives a biholomorphism 
\begin{equation*}
\exp:\C^n/2\pi i\,\Z^n\rightarrow(\C^*)^n,\ \ (x,y)\mapsto (e^{x_1+iy_1},\dots,e^{x_n+iy_n})
\end{equation*}
Set $f:=g\circ \exp$, ie $f(x,y):=g(e^{x_1}e^{iy_1},\dots,e^{x_n}e^{iy_n})$. Then
\begin{align*}
F(x)&=\frac{1}{(2\pi)^n}\iint f(x,y)\,dy=\frac{1}{(2\pi)^n}\iint g(e^{x_1}e^{iy_1},\dots,e^{x_n}e^{iy_n})\,dy\\
&=\frac{1}{(2\pi)^n}\iint g(r_1e^{i\theta_1},\dots,r_ne^{i\theta_n})\,d\theta=G(r_1,\dots,r_n)=G\circ\exp(x),
\end{align*}
 where we set $r_i=e^{x_i}$ and $\theta_i=y_i$. The result follows from Proposition \ref{prop:F}.
\end{proof}

Of course, if $-f$ (or $-g$) is PSH then $F$ (or $G$) is concave.

\ 

\noindent\textit{Remark. }The identification $\R^n\simeq (0,\infty)^n$ induced by the change of variables $x=\log r$ transforms lines (along which $F$ is convex) into a geometrically very different system of curves. For example, when $n=2$, the line $x_1+x_2=0$ is transformed into the curve $\log(r_1r_2)=0$, thus into the hyperbola $r_1r_2=1$.

\ 

\noindent\textit{Remark. }The canonical $n$-tori coincide with the orbits of the obvious action of the Lie group $\T^n$ on $(\C^*)^n$. It is clear that the above results extend to any \textit{Reinhardt subdomain} of $(\C^*)^n$, ie invariant under this action. For example it extends to products of annuli or punctured disks.

\ 

Proposition \ref{prop:G} generalizes a classical result of Riesz\footnote{or is it Hardy?} concerning the behaviour with respect to $r$ of the integral average along circles of a subharmonic function $g$ defined on a planar annulus. An alternative proof relies on comparing $g$, on any annulus, with a harmonic function with the same values on the boundary of the annulus and using the divergence theorem. 

To conclude, notice that replacing a pointwise quantity $g$ with an integral quantity $G$ clearly implies a loss of information. This corresponds to the kernel of the map $g\mapsto G$. On the other hand it substitutes the PSH condition, thus the rather sophisticated pluri-potential theory, with the much simpler convexity condition, thus elementary calculus. This is analogous to the general philosophy behind the construction of invariants: extract simplified information from a given geometric structure, which is easier to manipulate and sufficient to determine certain aspects of the geometry. We will further investigate this in the next section, then apply it to data generated from a K\"ahler structure.

\section{Applications to pluri-potential theory}\label{s:complexanalysis}

In order to put the results of Section \ref{s:PSH} into context, recall that a function $u$ on $\R^n$ is subharmonic if $\Delta u\geq 0$. The Mean Value Theorem (MVT) provides a fundamental tool for studying this condition by integrating $u$ over spheres $\Sph^{n-1}(r)\subseteq\R^n$ of radius $r$. The starting point for the theory is then a monotonicity result for the function $r\mapsto U(r):=\frac{1}{\omega}\iint u \frac{d\sigma}{r^{2n-1}}$, where $\omega$ is the volume of $\Sph^{n-1}(1)$: if $u$ is subharmonic this function is non-decreasing and $\lim_{r\rightarrow 0} U(r)=u(0)$. Furthermore, $U\equiv 0$ if and only if $u$ is harmonic and $u(0)=0$. In other words we have defined a linear map 
\begin{equation*}
\textit\{Subharmonic\ functions\}\rightarrow\textit\{Monotone\ functions\} 
\end{equation*}
and characterized its kernel. This map is of great use in studing the properties of subharmonic functions. 

\ 

In the context of subharmonic functions the case $n=2$ stands out as somewhat exceptional, as in Riesz' convexity theorem. Section \ref{s:PSH} serves to emphasize that this case should instead be viewed as the lowest-dimensional instance of pluri-potential theory.

The fact that any PSH function on $\C^n\simeq\R^{2n}$ is subharmonic implies that theorems concerning subharmonic functions apply automatically to PSH functions. Section \ref{s:PSH} suggests however that, in this context, tori offer an interesting alternative to spheres when working in the domain $(\C^*)^n$, encoding the specificities of the PSH condition. Furthermore, replacing codimension 1 with codimension $n$ submanifolds reduces the loss of information implicit in any MVT. In later sections we shall think of the PSH function as a volume density for the $n$-tori, so this toric analogue of the MVT will take on a very geometric flavour.

We collect here several applications of this construction. The proofs emphasize the use of integrals over tori and of standard calculus (including the divergence theorem), avoiding the more sophisticated use of harmonic functions built via the Perron method.

\paragraph{Monotonicity.}
As usual, it is convenient to start with the following monotonicity result.

\begin{prop}\label{prop:monotonicity}
Let $g$ be a PSH function defined on a Reinhardt domain in $(\C^*)^n$. If $g$ extends to a PSH function for $z_i=0$ then $G$ is non-decreasing with respect to the variable $r_i$.

It follows that if $g$ extends to a PSH function on a neighbourhood of $0\in\C^n$, then $G$ is non-decreasing with respect to each variable $r_i$. Furthermore, $G\equiv c$ is constant if and only if $g$ is harmonic with respect to each coordinate $z_i$ separately and $g(0)=c$.
\end{prop}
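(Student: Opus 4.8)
The plan is to derive both the monotonicity and the characterization of constancy from Proposition \ref{prop:G}, which already tells us that $G$ is convex in the variables $x_i=\log r_i$, together with two elementary inputs: the upper semicontinuity of PSH functions, and the trace identity $\tr(L_f)=\Delta f$ recorded in Section \ref{s:PSH}. Throughout I set $f:=g\circ\exp$ as in Proposition \ref{prop:G}, so that $F(x)=G(\exp x)$.

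For the monotonicity statement I would fix all $r_j$ with $j\neq i$ and regard $\psi(x_i):=G$ as a convex function of $x_i\in(-\infty,b_i)$ alone (the restriction of a convex function to a line is convex). The hypothesis that $g$ extends PSH across $\{z_i=0\}$ enters only through the left endpoint $x_i\to-\infty$, i.e. $r_i\to 0$: since PSH functions are upper semicontinuous, $g$ is locally bounded above near $\{z_i=0\}$, so on the relevant compact tori the average $G$ stays bounded above as $r_i\to 0$. A convex function on $(-\infty,b_i)$ that is bounded above near $-\infty$ is necessarily non-decreasing, since $\psi'_+(x_0)<0$ at any point would force $\psi(x)\to+\infty$ as $x\to-\infty$ by convexity. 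Hence $G$ is non-decreasing in $r_i$, and the second assertion follows by applying this to each $i$ in turn once $g$ is PSH on a full neighbourhood of the origin.

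For the characterization of constancy I would first establish, by differentiating under the integral sign and using the divergence theorem on the closed torus to kill the $y$-Laplacian, the identity $\Delta F(x)=\frac{1}{(2\pi)^n}\iint\Delta f\,dy$; in view of $\tr(L_f)=\Delta f$ this is simply the trace of the Hessian formula in the proof of Proposition \ref{prop:F}. If $G\equiv c$ then $F$ is constant, so $\Delta F\equiv 0$, and since $f$ is subharmonic ($\Delta f\geq 0$) the vanishing of its torus integral forces $\Delta f\equiv 0$. The PSH condition says the complex Hessian $H_f=(\partial_{z_j}\partial_{\bar z_k}f)$ is positive semi-definite, while $\tr H_f=\tfrac14\Delta f=0$; a positive semi-definite Hermitian matrix of zero trace vanishes, so $H_f\equiv 0$, i.e. $f$ — and hence $g$, since $\exp$ is a biholomorphism — is pluriharmonic, in particular separately harmonic in each $z_i$, and $g(0)=\lim_{r\to 0}G=c$. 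For the converse, if $g$ is separately harmonic in each $z_i$ the one-variable mean value property lets me integrate out $\theta_1,\dots,\theta_n$ one at a time, each integration replacing $z_i$ by its central value at $0$; iterating collapses $G$ to the constant $g(0)=c$.

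The step requiring the most care is the boundary behaviour as $r_i\to 0$: I must ensure the torus averages $G$ remain finite and bounded above there, which is precisely where the extension hypothesis and the upper semicontinuity of PSH functions are used, and I should stress that it is boundedness above at the $-\infty$ end — not the existence of a finite limit — that the convexity argument actually needs. The remaining algebra (the trace identity and the vanishing of a semi-positive Hermitian matrix of zero trace) is routine, so I expect no further difficulty there.
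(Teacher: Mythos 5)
Your proof is correct, but it runs along a genuinely different route from the paper's. The paper argues variable by variable: it restricts $g$ to the complex lines obtained by freezing the other coordinates, where PSH reduces to subharmonicity, and applies the divergence theorem to get $\frac{d}{dr}\int_{\Sph^1}g(re^{i\theta_1},w)\,d\theta_1=\frac1r\iint\Delta g\,dxdy\geq 0$; monotonicity of $G$ then follows by integrating out the remaining angles, and constancy of $G$ forces these disk integrals of $\Delta g$ to vanish, giving harmonicity in each $z_i$ separately. You instead recycle the convexity already established in Proposition \ref{prop:G}: for monotonicity you combine convexity of the slice $x_i\mapsto F$ on $(-\infty,b_i)$ with boundedness above near the $-\infty$ end (a correct one-variable convexity argument --- a negative one-sided slope would force blow-up as $x_i\to-\infty$), and for constancy you use $\Delta F=\frac{1}{(2\pi)^n}\iint\Delta f\,dy$ together with $\Delta f\geq 0$ and the linear-algebra fact that a positive semi-definite Hermitian matrix of zero trace vanishes. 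Two remarks on what each approach buys. First, your monotonicity argument uses the extension hypothesis only through an upper bound for $g$ near $\{z_i=0\}$, which is strictly weaker than requiring a PSH extension; the paper's line-by-line argument, on the other hand, yields the finer statement that each individual circle average $\int_{\Sph^1}g(re^{i\theta_i},\cdot)\,d\theta_i$ is monotone for every fixed value of the other coordinates --- information the paper reuses for the analogous statement about the max-function $M$. Second, your constancy argument actually proves more than stated: you conclude $g$ is \emph{pluriharmonic}, not merely separately harmonic. This is not an error --- under the PSH hypothesis the two are equivalent, since separate harmonicity kills the diagonal of the (positive semi-definite) complex Hessian, hence its trace, hence the whole matrix by exactly your linear-algebra step --- but it is worth noting that your route exposes this sharpening, whereas the paper's localizes directly to each coordinate. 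One caveat to keep in mind: your appeal to pointwise $\Delta f$, to continuity in concluding $g(0)=\lim_{r\to 0}G=c$, and to the extension of harmonicity across the strata $\{z_i=0\}$ all rely on the $C^2$ regularity the paper assumes throughout Section \ref{s:PSH}; in the general upper semicontinuous PSH setting these steps would need distributional reformulation, but in the paper's setting your argument is complete.
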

\begin{proof}For simplicity let us restrict to the case $n=2$ and assume $g$ is PSH on $\C\times\C^*$, thus subharmonic on any complex line defined by fixing $z_2:=w$. Then, by the divergence theorem,
\begin{align*}
\frac{d}{dr}\int_{\Sph^1} g(re^{i\theta_1},w)\,d\theta_1&=\int_{\Sph^1} \frac{\partial}{\partial r} g(re^{i\theta_1},w)\,d\theta_1=\frac{1}{r}\int_{\Sph^1}\frac{\partial}{\partial r} g(re^{i\theta_1},w)\, d\sigma\\
&=\frac{1}{r}\iint \Delta g(x+iy,w)\,dxdy\geq 0.
\end{align*}
This proves that $\int_{\Sph^1} g(re^{i\theta_1},w)\,d\theta_1\leq \int_{\Sph^1} g(Re^{i\theta_1},w)\,d\theta_1$, for any $r\leq R$ and $w=r_2e^{i\theta_2}$. Integrating with respect to $\theta_2$ leads to $G(r,r_2)\leq G(R,r_2)$ for any $r_2$, as desired.

Now assume $G$ is constant. Then, as before and assuming $r_1>0$ and $r_2>0$,
\begin{align*}
0&\equiv\frac{\partial G}{\partial r_1}(r_1,r_2)=\int_{\Sph^1}\left(\int_{\Sph^1}\frac{\partial g}{\partial r_1}d\theta_1\right) d\theta_2=\int_{\Sph^1}\left(\frac{1}{r_1}\int_{\Sph^1}\frac{\partial g}{\partial r_1}d\sigma\right) d\theta_2\\
&=\int_{\Sph^1}\left(\frac{1}{r_1}\iint\Delta g(x+iy,r_2e^{i\theta_2})\,dxdy\right) d\theta_2.
\end{align*} 
Since $\Delta g\geq 0$ this implies $\Delta g=0$ with respect to the variable $z_1$. The same holds with respect to $r_2$. By continuity the conclusion holds also for $r_1=0$ or $r_2=0$. The value $g(0)=c$ arises from taking the limits $r_i\rightarrow 0$. The converse is similar. 
\end{proof}

Notice here the relevance of the product structure of $(\C^*)^n$. Restricting to each variable separately would be less natural in the context of Proposition \ref{prop:F}, where it is important that convexity holds for all directions.

It follows from Proposition \ref{prop:monotonicity} that the kernel of the map $\{PSH\}\rightarrow\{convex\}$, on polydisks, is given by the PSH functions $g$ which are harmonic with respect to each $z_i$ separately and such that $g(0)=0$. In particular such functions are harmonic.

\paragraph{Interior maximum principles.}Monotonicity of integrals over spheres implies an interior maximum principle. We can modify the usual proof, using integrals over tori, as follows. Assume $g$ is PSH on $\C^n$ and has a maximum in $0$. By Proposition \ref{prop:monotonicity} it must be locally constant on all $n$-tori in $(\C^*)^n$. By continuity it is thus constant in a neighbourhood of $0\in\C^n$. A standard open/closed argument then implies it is constant.

The function $G$ (or $F$) also satisfies a maximum principle. The following statement is an immediate consequence of convexity.

\begin{prop}\label{prop:maxprinciple}
Let $g$ be PSH on a Reinhardt domain in $(\C^*)^n$. If the corresponding function $G$ has a maximum point, it is constant.
\end{prop}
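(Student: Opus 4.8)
The plan is to reduce the statement to the elementary fact that a convex function which attains its \emph{global} maximum at an interior point of a connected open domain must be constant, and then to invoke Proposition \ref{prop:G}. First I would pass to logarithmic coordinates. The projection $(\C^*)^n\to(0,\infty)^n$, $z\mapsto(|z_1|,\dots,|z_n|)$, is continuous, open and surjective; composing with $\log$ identifies the projection of the Reinhardt domain with an open, connected subset $\Omega\subseteq\R^n$ on which $\tilde G:=G\circ\exp$ is defined. By Proposition \ref{prop:G}, $\tilde G$ is convex on $\Omega$, and since $\exp$ is a diffeomorphism onto $(0,\infty)^n$, a maximum point of $G$ corresponds to a maximum point of $\tilde G$. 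The connectedness of $\Omega$ (as a continuous image of the connected Reinhardt domain) and its openness are precisely the topological facts that will let a purely local argument propagate globally.

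Next I would record the key one-point estimate. Suppose $\tilde G$ attains its maximum value $M$ at some $p\in\Omega$; as $\Omega$ is open, $p$ is interior, so $B(p,\varepsilon)\subset\Omega$ for some $\varepsilon>0$. For any $q\in B(p,\varepsilon/2)$ its reflection $p':=2p-q$ also lies in $B(p,\varepsilon)$, and $p=\tfrac12 q+\tfrac12 p'$. Convexity then gives $M=\tilde G(p)\le\tfrac12\tilde G(q)+\tfrac12\tilde G(p')\le M$, forcing equality and hence $\tilde G(q)=M$. Thus $\tilde G$ equals $M$ on an entire neighbourhood of \emph{any} point at which it attains the value $M$.

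I would then run an open/closed argument. The set $\{\tilde G=M\}$ is nonempty; it is closed in $\Omega$ by continuity; and, because $M$ is the global maximum, every point of this set is itself a maximum point, so by the previous step the set is open. Connectedness of $\Omega$ forces $\{\tilde G=M\}=\Omega$, i.e. $\tilde G$, and therefore $G$, is constant.

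The point requiring care—more a pitfall than a genuine obstacle—is that one should resist applying the chord characterization of convexity globally: a Reinhardt domain need not be logarithmically convex, so the segment joining two points of $\Omega$ may exit $\Omega$, and the global chord estimate is unavailable. This is exactly why I localize the convexity inequality to a small ball about the maximum and propagate via connectedness. I would also flag that the hypothesis must be read as a \emph{global} maximum: a convex function can possess an interior \emph{local} maximum without being constant (it may be flat along a segment and then increase beyond it), so the global nature of the maximum is genuinely used when asserting that the maximum set is open.
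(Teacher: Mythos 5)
Your proof is correct and takes essentially the same route as the paper, which simply records this proposition as an immediate consequence of the convexity furnished by Proposition \ref{prop:G}. The one refinement you add, localizing the chord inequality near the maximum and propagating via an open/closed argument because the logarithmic image of a Reinhardt domain need not be convex, is precisely the detail the paper's one-line proof leaves implicit, and your reading of ``maximum point'' as a global maximum is the intended one.
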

\paragraph{Hadamard's 3-circle theorem.}The classical version of this theorem implies that, if $g$ is subharmonic in an annulus in $\R^2$, then the function 
\begin{equation*}M(r):=\max\{g(re^{i\theta}):\theta\in\Sph^1\}
\end{equation*} 
is convex with respect to $\log r$. We can extend this to tori as follows.

\begin{prop}\label{prop:Hadamard}
Let $g$ be PSH on a Reinhardt domain in $(\C^*)^n$. Then the function
\begin{equation*}
M(r_1,\dots,r_n):=\max\{g(r_1e^{i\theta_1},\dots,r_ne^{i\theta_n}):(\theta_1,\dots,\theta_n)\in\T^n\}
\end{equation*}
is convex with respect to the variables $\log r_i$.
\end{prop}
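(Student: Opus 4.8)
The plan is to reduce the statement to convexity of functions of a single variable along straight lines in the $\log r$ coordinates, and to establish that one-dimensional convexity via the standard subharmonic maximum principle combined with a comparison argument. Passing to the exponential chart $x_i=\log r_i$ via the biholomorphism $\exp:\C^n/2\pi i\,\Z^n\to(\C^*)^n$, as in the proof of Proposition \ref{prop:G}, the function $f:=g\circ\exp$ is PSH on (a Reinhardt domain in) $\C^n/2\pi i\,\Z^n$, and $M$ becomes
\begin{equation*}
\widetilde M(x):=\max\{f(x,y):y\in\T^n\}.
\end{equation*}
So it suffices to show $\widetilde M$ is convex in $x\in\R^n$. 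Since convexity of a function on $\R^n$ is equivalent to convexity of its restriction to every affine line, I would fix a line $x(t)=x_0+tv$ and prove that $t\mapsto\widetilde M(x(t))$ is convex. This is the natural translation of the ``convex with respect to $\log r$'' conclusion into a statement about one real parameter, which is exactly where the classical proof operates.

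The core one-dimensional argument is a comparison/maximum-principle step. First I would note that $f$, being PSH, is in particular subharmonic, so it obeys the maximum principle on compact domains. The key structural fact is that along the line $x(t)$, with $y$ free, we are examining $f$ on a family of tori sitting over a segment; a \emph{linear} function $a t+b$ of the base parameter $t=\langle v,x-x_0\rangle/|v|^2$ is the real part of a holomorphic (in fact affine-linear) function of $z=x+iy$, hence harmonic, hence both PSH and $-$PSH. Given any subinterval $[t_1,t_2]$, I would choose the affine function $h(t)$ with $h(t_1)=\widetilde M(x(t_1))$ and $h(t_2)=\widetilde M(x(t_2))$ and show $\widetilde M(x(t))\le h(t)$ on $[t_1,t_2]$. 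The mechanism is that $f(x,y)-\widetilde h$, where $\widetilde h$ is the harmonic extension of $h$ reading off the base coordinate, is subharmonic on the slab between the two boundary tori and is $\le 0$ on the boundary; by the maximum principle it is $\le 0$ throughout, and taking the max over $y$ recovers $\widetilde M(x(t))\le h(t)$. This is precisely the convexity inequality for $\widetilde M$ along the line.

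I expect the main obstacle to be technical bookkeeping rather than conceptual. The delicate point is making the comparison rigorous when the maximum over $y$ is attained at different points $y$ for different $t$: $\widetilde M$ is a maximum of smooth functions and is only guaranteed to be continuous, not $C^2$, so the convexity must be proved through the integral/maximum inequality above rather than by differentiating $\widetilde M$ twice. A second subtlety is domain-dependence: on a general Reinhardt subdomain the slab between two tori may not be fully contained in the domain, so I would either restrict attention to the convex (in $x$) sub-region where the statement is asserted, or invoke that the $\log r$-image of a Reinhardt domain is of the form $\Omega\times\T^n$ with $\Omega\subseteq\R^n$ open and treat convexity on $\Omega$. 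Finally, one must confirm that the affine comparison function $h$ genuinely lifts to a harmonic function of $z$ whose real part depends only on the chosen base direction; this is immediate since $t$ is an $\R$-linear functional of $x$, hence the real part of a $\C$-linear functional of $z$, so $h$ is harmonic as required.
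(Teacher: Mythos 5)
Your reduction to affine lines in the $x$-coordinates is sound, and the comparison-with-a-linear-function idea is the classical route, but the maximum-principle step fails for $n\geq 2$ as written. The ``slab between the two boundary tori'' is problematic under either reading. If you mean the set $\{(x_0+tv,\,y): t\in[t_1,t_2],\ y\in\T^n\}$, this is an $(n{+}1)$-real-dimensional submanifold, not an open subset of $\C^n/2\pi i\,\Z^n$, and it is not a complex submanifold, so the restriction of $f-\widetilde h$ to it is not subharmonic in any sense that supports a maximum principle. If instead you mean the genuine open slab $\{x: t_1\leq t(x)\leq t_2\}\times\T^n$ with $t(x)=\langle v,x-x_0\rangle/|v|^2$, then its boundary consists of two full hyperplanes (times $\T^n$), on which $f$ is controlled only by $\widetilde M(x)$, not by $h(t_i)=\widetilde M(x(t_i))$: away from the two chosen points on the line, $\widetilde M$ on the hyperplane $\{t(x)=t_i\}$ can be strictly larger. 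Concretely, take $n=2$, $f(x,y)=x_2^2$ (convex in $x$, hence PSH) and $v=e_1$, $x_0=0$: then $h\equiv 0$, yet $f=x_2^2>0$ on most of each boundary hyperplane, so the asserted boundary inequality $f-\widetilde h\leq 0$ is false, even though the conclusion happens to hold. Moreover, for $n\geq 2$ the slab is unbounded in the directions orthogonal to $v$, so even with correct boundary control one would need a Phragm\'en--Lindel\"of-type argument, not the plain maximum principle. Only for $n=1$, where the slab is an annulus, is your argument the complete classical proof.

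The repair is to compare on complex lines rather than slabs: for each fixed $y_0\in\T^n$, restrict $f$ to the strip $\zeta\mapsto (x_0+iy_0)+\zeta v$, $t_1\leq\Ree\zeta\leq t_2$. Since $v$ is real, the base coordinate moves only along the segment $x_0+tv$, so the closure of the image sits inside the compact set $(\mbox{segment})\times\T^n$; the restriction is subharmonic in $\zeta$, bounded above, and on the two boundary lines it is bounded by $\widetilde M(x(t_i))=h(t_i)$ --- exactly the control you wanted. A Phragm\'en--Lindel\"of maximum principle on the strip (or, for $v$ proportional to a lattice vector, the ordinary maximum principle on the compact cylinder obtained by quotienting, with general $v$ recovered by density of such directions and continuity of $\widetilde M$) then gives $f\leq h(\Ree\zeta)$, and taking $\zeta=t$ real and the supremum over $y_0$ yields the convexity inequality. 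Note that this route, once fixed, is genuinely different from the paper's one-line proof, which observes that $M=\sup_{\theta\in\T^n}\theta^*g$ is a supremum of PSH functions, hence PSH (no upper regularization is needed for a sup over a compact group orbit), and is $\theta$-independent, so convexity in $\log r$ follows from the special case $f=f(x)$ of Section \ref{s:PSH}; your approach trades that soft pluripotential-theoretic input for an elementary one-variable comparison, at the cost of the strip analysis above.
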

\begin{proof}
Since $\T^n$ acts holomorphically, each pull-back function $\theta^*g$ is PSH so, by standard theory, the supremum of all such functions is again PSH. This function coincides with $M$. It follows that $M$ is PSH and $\theta$-independent, thus convex.
\end{proof}

More generally, the properties of $M$ are similar to those of the function $G$. Here is another example.

\begin{prop}\label{prop:monotonicity}
Let $g$ be a PSH function defined on a Reinhardt domain in $(\C^*)^n$. If $g$ extends to a PSH function for $z_i=0$ then $M$ is non-decreasing with respect to the variable $r_i$.

If $g$ extends to a PSH function on a neighbourhood of $0\in\C^n$ then $M$ is non-decreasing with respect to each variable $r_i$.
\end{prop}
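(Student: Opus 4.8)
The plan is to mirror the earlier monotonicity argument for $G$, replacing the integral over each circle by a maximum and the divergence theorem by the one-variable maximum principle. As in that proof, the key structural fact is that the restriction of a PSH function to a complex line is subharmonic. I would fix an index $i$ and radii $r_j$ for $j\neq i$, take $r_i\leq R_i$ with both admissible, and aim to show $M(\dots,r_i,\dots)\leq M(\dots,R_i,\dots)$. Note that the convexity of $M$ in $\log r_i$ already proved in Proposition~\ref{prop:Hadamard} is not by itself enough, since convexity gives monotonicity only together with boundary information; the extension hypothesis is what supplies that information.

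First I would invoke compactness of $\T^n$ and continuity of $g$ to select angles $\theta^\ast=(\theta^\ast_1,\dots,\theta^\ast_n)$ realizing the maximum defining $M$ at the smaller radius, so that $M(\dots,r_i,\dots)=g(r_1e^{i\theta^\ast_1},\dots,r_ie^{i\theta^\ast_i},\dots)$. Then I would freeze every coordinate except the $i$-th at the values $r_je^{i\theta^\ast_j}$ and consider the single-variable function $h(z_i):=g(\dots,z_i,\dots)$ along the resulting complex line. By the restriction property $h$ is subharmonic, and since $g$ is assumed to extend as a PSH function across $z_i=0$, the function $h$ is subharmonic on the \emph{full} disk $\{|z_i|\leq R_i\}$, origin included.

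The core of the argument is then the one-variable statement that, for subharmonic $h$ on a disk, $r\mapsto\max_{|z_i|=r}h$ is non-decreasing. This follows at once from the maximum principle: the maximum of $h$ over the closed disk $\{|z_i|\leq R_i\}$ is attained on its boundary circle, and the smaller closed disk $\{|z_i|\leq r_i\}$ sits inside it, so $\max_{|z_i|=r_i}h\leq\max_{|z_i|=R_i}h$. Chaining inequalities would give
\[
M(\dots,r_i,\dots)=h(r_ie^{i\theta^\ast_i})\leq\max_{|z_i|=r_i}h\leq\max_{|z_i|=R_i}h\leq M(\dots,R_i,\dots),
\]
where the last step holds because holding the remaining angles fixed at $\theta^\ast_j$ can only decrease the maximum taken over the full torus. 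This proves monotonicity in $r_i$; the second assertion follows by applying the same argument to each index in turn, the hypothesis of PSH extension to a neighbourhood of $0\in\C^n$ guaranteeing that every one-variable slice contains its origin.

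I expect the only genuine subtlety to be bookkeeping about the extension: one must verify that freezing the other coordinates at $r_je^{i\theta^\ast_j}$ really yields a complex line on which the extended $g$ is defined and subharmonic all the way down to $z_i=0$, rather than merely on a punctured disk. Given the Reinhardt (hence angle-invariant) shape of the domain together with the stated extension hypothesis, this is automatic, so no comparison with harmonic functions or Perron-type construction is needed, in keeping with the elementary spirit of the section.
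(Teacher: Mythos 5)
Your proof is correct and follows essentially the same route as the paper: reduce to one-variable slices (subharmonic on a full disk thanks to the extension hypothesis), apply the classical maximum principle to get monotonicity of the circle-maximum, and recover $M$ by maximizing over the remaining angles. The paper merely organizes the last step as ``a maximum of monotone functions is monotone'' via an auxiliary function $M'$, whereas you chain inequalities through a maximizing angle $\theta^\ast$ --- the same argument in different packaging.
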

\begin{proof}As before, we will restrict ourselves to the simplest case where $g$ is defined on $\C\times\C^*$. Fix $z_2:=w$ and set $M'(r_1,w):=\max\{g(r_1e^{i\theta_1}, w):\theta_1\in\Sph^1\}$. The maximum principle then implies that $M'$ is monotone with respect to $r_1$. Since this holds for any $w=r_2e^{i\theta_2}$ and $M(r_1,r_2)=\max\{M'(r_1,r_2e^{i\theta_2}): \theta_2\in\Sph^1\}$, the statement is clear.
\end{proof}

The following strong maximum principle is now clear.
\begin{prop}\label{prop:interiormax}
Let $g$ be PSH on a polydisk $\prod\{|z_i|<r_i\}\subseteq\C^n$ and continuous up to the boundary. The maximum of $g$ is then achieved on the distinguished boundary, ie the $n$-torus $\prod\{|z_i|=r_i\}$.
\end{prop}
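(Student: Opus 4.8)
The plan is to deduce this from the monotonicity of the fibrewise maximum function $M$, rather than to re-run a maximum-principle argument from scratch. The starting observation is that the closed polydisk $\overline P := \prod\{|z_i|\le r_i\}$ is swept out by the tori $T(s):=\prod\{|z_i|=s_i\}$ as $s=(s_1,\dots,s_n)$ ranges over $\prod[0,r_i]$. Maximizing $g$ first along each torus and then over the radii gives
\[
\max_{\overline P} g \;=\; \max_{s\in\prod[0,r_i]} M(s),
\]
where $M(s)=\max\{g(s_1e^{i\theta_1},\dots,s_ne^{i\theta_n}):\theta\in\T^n\}$ is the function of Proposition~\ref{prop:Hadamard}. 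The right-hand maximum exists because $g$ is continuous up to the boundary, so $M$ extends continuously to the boundary radii.

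First I would note that $g$, being PSH on the entire polydisk, is in particular PSH on a neighbourhood of $0\in\C^n$ and extends as a PSH function across each hyperplane $\{z_i=0\}$. This is exactly the hypothesis of the monotonicity result for $M$ (Proposition~\ref{prop:monotonicity}), which therefore applies and tells us that $M(s)$ is non-decreasing with respect to each variable $s_i$.

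Combining the two, the maximum of the non-decreasing, boundary-continuous function $M$ over $\prod[0,r_i]$ is attained at the corner $s=(r_1,\dots,r_n)$, so that
\[
\max_{\overline P} g \;=\; M(r_1,\dots,r_n)\;=\;\max\{g(z):z\in\textstyle\prod\{|z_i|=r_i\}\},
\]
which is precisely the maximum over the distinguished boundary.

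Since the monotonicity of $M$ does the real work, there is no substantial obstacle here; the only point needing care is the passage to the closed polydisk. The monotonicity Proposition is stated for $g$ on a Reinhardt subdomain of $(\C^*)^n$, hence for strictly positive radii, so I would use the continuity of $g$ up to the boundary to propagate the monotonicity of $M$ to the boundary radii $s_i=r_i$ and to handle the degenerate tori where some $s_i=0$.
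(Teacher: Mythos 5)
Your proposal is correct and is precisely the argument the paper intends: the paper states this proposition immediately after the monotonicity result for $M$ with the remark that it is ``now clear,'' i.e.\ the maximum over the closed polydisk equals the maximum of the non-decreasing function $M$ over the radii, attained at the corner $(r_1,\dots,r_n)$. Your extra care about propagating monotonicity to the boundary radii via continuity and about the degenerate tori with some $s_i=0$ just fills in details the paper leaves implicit.
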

\paragraph{Liouville's theorem.} Proposition \ref{prop:Hadamard} leads to the following version of Liouville's theorem.
\begin{prop}\label{prop:Liouville}
Any PSH function $g$ on $(\C^*)^n$ bounded from above is constant. 
\end{prop}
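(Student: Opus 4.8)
The plan is to deduce everything from Proposition \ref{prop:Hadamard}. On the full group $(\C^*)^n$ each radius $r_i$ ranges over the whole of $(0,\infty)$, so in the coordinates $x_i=\log r_i$ the maximum function $M$ becomes a function $\widetilde M:=M\circ\exp$ defined on \emph{all} of $\R^n$, and by Proposition \ref{prop:Hadamard} this function is convex. If $g$ is bounded above, say $g\le C$, then trivially $M\le C$, so $\widetilde M$ is a convex function on $\R^n$ that is bounded from above. The strategy is to show that this forces $\widetilde M$, and ultimately $g$, to be constant.

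The first step is the elementary fact that a convex function on $\R^n$ bounded above is constant. I would prove this by restricting to an arbitrary line $t\mapsto x_0+tv$: the restriction is a convex function of one real variable, still bounded above, and such a function must be constant. Indeed, if it took distinct values $\phi(a)<\phi(b)$ with $a<b$, convexity would force $\phi(t)$ to lie above the secant line through these two points for $t>b$, hence $\phi(t)\to+\infty$ as $t\to+\infty$, contradicting boundedness; the opposite inequality is excluded symmetrically by letting $t\to-\infty$. Since the restriction to every line is constant, $\widetilde M$ is constant. Hence $M\equiv C_0$ for some constant $C_0$, and necessarily $C_0=\sup_{(\C^*)^n} g$.

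It remains to pass from the constancy of $M$ to the constancy of $g$, and this is the real content of the argument. Since each canonical $n$-torus is compact and $g$ is continuous, the maximum defining $M$ is attained: for every $r$ there exists $\theta$ with $g(r_1e^{i\theta_1},\dots,r_ne^{i\theta_n})=C_0=\sup g$. Thus $g$ attains its global supremum at an interior point of $(\C^*)^n$. Now I invoke the strong maximum principle for PSH functions discussed above (the interior maximum principle, obtained via Proposition \ref{prop:monotonicity} together with the open/closed argument): a PSH function attaining an interior maximum on the connected manifold $(\C^*)^n$ is constant. Therefore $g$ is constant, as claimed.

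The main obstacle is precisely this last transition. Convexity of $\widetilde M$ only controls the \emph{fibrewise} maxima of $g$, so on its own the constancy of $M$ says nothing about the behaviour of $g$ within a single torus; what bridges the gap is the observation that constancy of $M$ upgrades a fibrewise maximum into a genuine interior global maximum, at which point the PSH maximum principle applies. I would take care to note that, although that maximum principle was phrased for domains in $\C^n$, its proof is local followed by an open/closed argument, so it applies verbatim on the connected domain $(\C^*)^n$.
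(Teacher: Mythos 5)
Your proof is correct and follows essentially the same route as the paper: convexity of $M$ via Proposition \ref{prop:Hadamard}, constancy of a convex function on $\R^n$ bounded from above, attainment of the supremum on a compact torus, and then the interior maximum principle for PSH functions. You simply spell out the details the paper leaves implicit (the one-variable argument for convex functions and the local-plus-open/closed validity of the maximum principle on $(\C^*)^n$), which is a faithful expansion rather than a different approach.
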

\begin{proof}
The corresponding function $M$ is convex on $\R^n$ (using the variables $\log r_i$) and bounded from above, thus constant. Any $n$-torus achieves this maximum value, so by the maximum principle $g$ is constant.
\end{proof}

An alternative proof is based on pulling $g$ back to $\C^n$ and applying the standard Liouville theorem for PSH functions.

\ 

Geometrically, Proposition \ref{prop:Liouville} has the following consequence. Let $(M,J)$ be a complex manifold endowed with a PSH function $g$. Assume given a holomorphic map $\phi:(\C^*)^k\rightarrow M$ such that $g\circ\phi$ tends to zero. Roughly speaking, this means that ``at infinity" the image of $(\C^*)^k$ lies in the zero set $Z(g):=g^{-1}(0)$. The conclusion is that the whole image lies in $Z(g)$. 

\ 

\noindent\textit{Remark. }Holomorphic maps $\phi:(\C^*)^k\rightarrow M$ are relevant in at least two contexts. 

First, $(\C^*)^k$ is a complex Lie group. We may think of it as the complexification of $\T^k$. The action of such Lie groups on complex manifolds is an important research topic, including for example the theory of toric varieties. Given any such action, any point $p\in M$ yields a holomorphic map $\phi:(\C^*)^k\rightarrow M$ defined by $g\mapsto g(p)$. Its image is simply the orbit of $p$.

Second, any such $\phi$ lifts to a holomorphic map $\hat\phi:\C^k\rightarrow M$, then restricts to a holomorphic map defined on any complex line $\C\subseteq\C^k$. Conversely, any holomorphic map $\C\rightarrow M$ restricts to a map defined on $\C^*$. The existence of (non-constant) holomorphic maps $\C\rightarrow M$ is in turn closely related to important complex-analytic properties of $M$. In particular, recall that Kobayashi defined a pseudo-distance $d$ on any complex manifold such that any holomorphic map is a contraction. When $M=\C$, $d\equiv 0$. A manifold $M$ is \textit{hyperbolic} if $d$ is actually a distance: this implies that any holomorphic map $\C\rightarrow M$ is constant. Actually, if $M$ is compact, Brody \cite{Brody} proved that such maps exist if and only if $M$ is not hyperbolic.

We will return to both of these contexts in Section \ref{s:toric}.

\section{Ricci curvature and volume}\label{s:linebundles}

The PSH functions we are interested in arise from the following geometric construction.

Any Hermitian metric $h$ on a holomorphic vector bundle $E\rightarrow M$ induces a canonical connnection on $E$ known as the \textit{Chern connection}. Its curvature $\Theta$ is a global $End(E)$-valued (1,1)-form on $M$. We shall restrict our attention to the case where $E=L$ is a holomorphic line bundle. In this case the Chern connection has a convenient expression in terms of any local nowhere-vanishing holomorphic section $\sigma$ of $L$. Set $H:=h(\sigma,\sigma)$. The connection is then described by the 1-form $\partial \log H$, while $\Theta=\bar\partial\partial \log H$. The object
\begin{equation*}
\rho(X,Y)=i\Theta(X,Y)=i\partial\bar\partial (-\log H)(X,Y)
\end{equation*} 
is a closed global real-valued (1,1)-form which, by Chern-Weil theory, represents the (real) first Chern class of $L$: $[\rho]=2\pi c_1(L)$. Thus $\rho$ is positive (negative) semi-definite if and only if $-\log H$ ($\log H$) is PSH. In other words, we can control the PSH condition using the curvature of $L$, thus $c_1(L)$. 

This has for example the following application.

\begin{prop}\label{prop:nonpositive_c1}
Let $M$ be a compact complex manifold which satisfies the $\partial\bar\partial$-lemma (for example, $M$ K\"ahler). Let $L\rightarrow M$ be a non-trivial holomorphic line bundle. If $c_1(L)\leq 0$ then $L$ does not admit non-trivial holomorphic sections. 
\end{prop}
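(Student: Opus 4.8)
The plan is to argue by contradiction using the curvature-positivity dictionary established just above the statement, combined with the Liouville-type rigidity coming from the maximum principle. Suppose $L$ admits a non-trivial holomorphic section $\sigma$. The strategy is to produce, from the pairing $H := h(\sigma,\sigma)$, a function whose sign behaviour is controlled by $c_1(L) \leq 0$ and whose boundedness (guaranteed by compactness of $M$) then forces rigidity, contradicting non-triviality of $\sigma$.

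First I would set up the local picture. Away from the zero set $Z(\sigma)$, the section $\sigma$ is a nowhere-vanishing holomorphic frame, so the formulas preceding the statement apply: the curvature form is $\rho = i\partial\bar\partial(-\log H)$, and $[\rho] = 2\pi c_1(L) \leq 0$ means $\rho$ is negative semi-definite, i.e. $\log H$ is PSH on $M \setminus Z(\sigma)$. The natural object to examine is therefore $\log H$, or equivalently $\log|\sigma|_h^2$, which is a PSH function on the open set where $\sigma \neq 0$. The key local fact to verify is that $\log H$ extends as a PSH function (to $-\infty$) across $Z(\sigma)$: this is standard, since near a zero $\log|\sigma|_h^2 = \log|s|^2 + (\text{smooth})$ for a local holomorphic function $s$, and $\log|s|^2$ is PSH with a logarithmic pole. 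Thus $\log H$ is a global PSH function on all of $M$, with value $-\infty$ exactly on $Z(\sigma)$.

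Next I would exploit compactness. On a compact $M$ any continuous (here upper semi-continuous) function is bounded above, so $\log H$ is a PSH function bounded from above. The decisive step is to invoke the maximum principle for PSH functions on compact complex manifolds: a PSH function attaining an interior maximum must be constant (this is the content behind Propositions \ref{prop:maxprinciple} and \ref{prop:Liouville}, applied after localizing, or directly via the subharmonicity of $\log H$ restricted to any local holomorphic disk). Since $M$ is compact, $\log H$ attains its maximum, so it is constant. But a constant PSH function has $\rho \equiv 0$, and moreover $\log H$ constant means $|\sigma|_h$ is constant, hence nowhere zero, so $Z(\sigma) = \emptyset$ and $\sigma$ trivializes $L$. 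This contradicts the hypothesis that $L$ is non-trivial (a trivializing holomorphic section would make $L \cong \mathcal{O}_M$).

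The main obstacle I anticipate is the precise role of the $\partial\bar\partial$-lemma in the hypothesis, which suggests the intended argument is slightly more delicate than the pure maximum-principle sketch above. The subtlety is that $c_1(L) \leq 0$ is a cohomological condition on the \emph{class} $[\rho]$, whereas the PSH argument needs pointwise negativity of some representative. The $\partial\bar\partial$-lemma is exactly what allows one to pass from a cohomological vanishing/negativity to a genuine potential: if $c_1(L) \leq 0$ one can choose a metric $h$ (or modify it by $e^{-\phi}$) so that the curvature $\rho$ is itself semi-negative, using that two closed real $(1,1)$-forms in the same class differ by $i\partial\bar\partial\phi$. I would therefore insert, as the technical heart of the proof, the reduction ``by the $\partial\bar\partial$-lemma, choose $h$ so that $\rho \leq 0$ pointwise,'' and treat the extension of $\log H$ across $Z(\sigma)$ and the compactness maximum-principle as the routine remainder. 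Once pointwise semi-negativity is secured, the contradiction with non-triviality is immediate.
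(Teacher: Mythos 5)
Your proposal is correct and takes essentially the same route as the paper: use the $\partial\bar\partial$-lemma to replace the cohomological condition $c_1(L)\leq 0$ by a metric $h$ with pointwise semi-negative curvature, observe that $\log H$ with $H:=h(\sigma,\sigma)$ is then PSH, and apply the maximum principle on the compact manifold to contradict the non-emptiness of the zero set forced by non-triviality of $L$. The only (harmless) difference is that the paper sidesteps the $-\infty$ poles of $\log H$ by exponentiating to the globally continuous PSH function $H=\exp(\log H)$, which the max principle makes constant and the vanishing near $Z(\sigma)$ makes identically zero, yielding $\sigma\equiv 0$ directly.
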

\begin{proof}The condition $c_1(L)\leq 0$, together with the $\partial\bar\partial$-lemma, implies that $L$ has non-positive curvature for some Hermitian metric $h$. Assume $\sigma$ is a global holomorphic section. Since $L$ is non-trivial, $\sigma$ must vanish somewhere so its zero set $D$ is non-empty. Set $H:=h(\sigma, \sigma)$. Then $\log H$, thus $H$, is PSH. Since $h$ is globally well-defined, $H$ is constant by the max principle. Since $H\rightarrow 0$ near $D$, $H\equiv 0$ so $\sigma\equiv 0$.
\end{proof}

In particular, let $K_M:=\Lambda^{n,0}M$ denote the \textit{canonical} line bundle on $M$. We shall apply the above construction to its dual, ie the \textit{anti-canonical} line bundle $L:=K_M^{-1}$. It is customary to write $c_1(M):=c_1(K_M^{-1})$. In this setting, one way to obtain $h$ and $\sigma$ as above is by starting with appropriate data on $M$. Indeed, any Hermitian metric $h$ on $M$ induces a Hermitian metric $h$ on $K_M^{-1}$, and linearly independent local holomorphic vector fields $v_1,\dots,v_n$ on $M$ (such as $v_i:=\partial z_i$ generated by coordinates) define a holomorphic section $\sigma:=v_1\wedge\dots\wedge v_n$ of $K_M^{-1}$. With these choices $H=\det h_{i \bar j}$, where $h_{i \bar j}:=h(v_i,v_j)$. 

Now assume $M$ is K\"ahler. In this context it is well-known that the Chern connection on $K_M^{-1}$ coincides with the Levi-Civita connection and that $\rho(\cdot,J\cdot)=\Ric$, where the latter denotes the standard (Riemannian) Ricci curvature of $M$. Furthermore, the  formula $h=g-i\omega$ relates the Hermitian metric on $M$ to the Riemannian and symplectic structures. The vectors $v_1,\dots,v_n,Jv_1,\dots,Jv_n$ provide a real local basis, so we can write the Riemannian volume form $\vol_M$ in terms of (the square root of) the determinant of their dot products, thus in terms of $g(v_i,v_j)$ and $\omega(v_i,v_j)$. It follows that $\det h_{i\bar j}$ coincides with the density of this volume form. 

To summarize: the same quantity $H$ appears in both formulae 
$$\rho=i\partial\bar\partial(-\log H),\ \ \vol_M=H\,v_1^*\wedge\dots\wedge v_n^*\wedge Jv_1^*\wedge\dots\wedge Jv_n^*$$ 
for the Ricci 2-form and the ambient volume form.

\ 

\noindent\textit{Remark. }This is a special manifestation of the close relationship between Ricci curvature and volume. Another is the well-known fact that, in any Riemannian manifold, the Ricci curvature appears as one of the coefficients of the Taylor expansion of the density of the volume form, along any geodesic.

\section{Digression: toric varieties.}\label{s:toric}

Toric varieties provide a perfect geometric setting in which to apply these constructions. Although the results in this section can often be proved in other ways, it seems worthwhile to emphasize their relationship with this theory. 

Recall that a toric variety is a manifold $M$ endowed with an effective $G:=(\C^*)^n$-action admitting a dense open orbit. The fact that $G$ is Abelian implies that the isotropy groups $G_p$ of any orbit are independent of the specific point $p$, and by continuity the isotropy group $G'$ of the open dense orbit is contained in the isotropy group of any other orbit, ie $G'\leq G_p$ for all $p\in M$. The effectivity condition thus implies that $G'$ is trivial, so the action of $G$ on the open dense orbit is free. If $M$ is compact, we can thus think of it as a compactification of $(\C^*)^n$. 

\paragraph{Topology.}The following result is a variation on Proposition \ref{prop:nonpositive_c1}.

\begin{prop}\label{prop:nonpositive_toric}
The first Chern class $c_1(M)$ of a compact K\"ahler toric manifold $M$ cannot satisfy $c_1(M)\leq 0$. In particular, $M$ cannot admit a K\"ahler metric with $\Ric\leq 0$.
\end{prop}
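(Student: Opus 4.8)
The plan is to deduce this from Proposition \ref{prop:nonpositive_c1} applied to the anti-canonical bundle $L := K_M^{-1}$, for which $c_1(L) = c_1(M)$ by definition. Since $M$ is compact Kähler it satisfies the $\partial\bar\partial$-lemma, so the hypotheses of that proposition are in force. It therefore suffices to exhibit, on every compact Kähler toric manifold, a non-trivial holomorphic section of $K_M^{-1}$, together with the fact that $K_M^{-1}$ is itself a non-trivial line bundle: the existence of such a section then contradicts the conclusion of Proposition \ref{prop:nonpositive_c1} under the assumption $c_1(M)\le 0$, ruling that assumption out.

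To produce the section I would use the $G=(\C^*)^n$-action directly. A holomorphic action furnishes, for each element $\xi$ of the Lie algebra of $G$, a global holomorphic vector field $X_\xi$ on $M$, namely the infinitesimal generator $X_\xi(p)=\tfrac{d}{dt}\big|_{0}\exp(t\xi)\cdot p$; holomorphicity of $X_\xi$ is immediate from holomorphicity of the action. Choosing a basis $\xi_1,\dots,\xi_n$ of the Lie algebra yields global holomorphic vector fields $X_1,\dots,X_n$, and their wedge $\sigma := X_1\wedge\cdots\wedge X_n$ is a global holomorphic section of $K_M^{-1}=\Lambda^n T^{1,0}M$. On the open dense orbit, where the action is free, the $X_i$ restrict to the fundamental fields $z_i\,\partial_{z_i}$ of $(\C^*)^n$ and are pointwise linearly independent, so $\sigma$ is non-zero there; in particular $\sigma$ is non-trivial.

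Since $M$ is compact while $(\C^*)^n$ is not, the boundary $D:=M\setminus(\C^*)^n$ is non-empty, and along $D$ the generators $X_i$ become linearly dependent, so $\sigma$ vanishes on $D$. This yields two conclusions. First, $K_M^{-1}$ cannot be trivial: a section of a trivial line bundle is a holomorphic function, hence constant on compact $M$, whereas $\sigma$ is non-zero on the orbit and zero on $D$. Second, with $K_M^{-1}$ non-trivial and the assumption $c_1(M)=c_1(K_M^{-1})\le 0$, Proposition \ref{prop:nonpositive_c1} forbids any non-trivial holomorphic section, contradicting the existence of $\sigma$. The Ricci statement then follows from the line-bundle description of Section \ref{s:linebundles}: for a Kähler metric the Ricci form $\rho$ of $K_M^{-1}$ satisfies $\rho(\cdot,J\cdot)=\Ric$ and $[\rho]=2\pi c_1(M)$, so $\Ric\le 0$ would furnish a semi-negative representative of $c_1(M)$, i.e. $c_1(M)\le 0$, which we have just excluded.

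I expect the delicate point to be the behaviour of $\sigma$ over the lower-dimensional orbits: establishing that the degeneration locus of $X_1\wedge\cdots\wedge X_n$ is exactly the non-free locus $D$, and that $D\neq\varnothing$ by compactness, is the crux. The extension of the $X_i$ across $D$ is automatic from the global holomorphic action, but one must make sure $\sigma$ genuinely vanishes there rather than surviving through some cancellation; everything else is a direct application of Proposition \ref{prop:nonpositive_c1} and the Ricci--volume correspondence.
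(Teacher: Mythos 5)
Your proof is correct and follows essentially the same route as the paper's: both wedge the global holomorphic fundamental vector fields of the torus action into a section $\sigma$ of $K_M^{-1}$ that is non-vanishing on the open dense orbit and vanishes on its complement (non-empty by compactness), then derive a contradiction with $c_1(M)\leq 0$ via the mechanism of Proposition \ref{prop:nonpositive_c1}, the Ricci statement following from $\rho(\cdot,J\cdot)=\Ric$ and $[\rho]=2\pi c_1(M)$. Your flagged ``delicate point'' is in fact automatic --- at a point of a lower-dimensional orbit the $X_i(p)$ span the orbit's tangent space, hence are linearly dependent, and a wedge of pointwise dependent vectors vanishes pointwise with no possible cancellation --- and the only packaging difference is that the paper re-runs the maximum-principle argument of Proposition \ref{prop:nonpositive_c1} directly (so it never needs non-triviality of $K_M^{-1}$ as a separate hypothesis), whereas you cite that proposition as a black box and correctly supply the missing non-triviality via the constant-function argument on compact $M$.
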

\begin{proof}Let $(\C^*)^n\subseteq M$ be the open dense orbit, endowed with the induced metric. The vector fields $v_i:=\partial y_i$ defined by the $\T^n$-action are globally well-defined and holomorphic but, since the orbit's closure contains only lower-dimensional orbits, they cannot be linearly independent outside of this orbit. It follows that the holomorphic section $\sigma:=v_1^*\wedge\dots\wedge v_n^*$ of $K_M^{-1}$ vanishes outside this orbit. Assuming $c_1(M)\leq 0$, let us choose $h$ as in the proof of Proposition \ref{prop:nonpositive_c1}. The corresponding function $H:=h(\sigma,\sigma)$ is globally well-defined, PSH, and zero outside this orbit. We may conclude as in Proposition \ref{prop:nonpositive_c1}. 

Alternatively, the function $G$ obtained by integrating $H$ along the $\T^n$-orbits is convex on $\R^n$. Since $H$ is bounded, the dominated convergence theorem shows that $G\rightarrow 0$ near the boundary of the orbit, so it is constantly zero. This constradicts the fact that $H$ is strictly positive on $(\C^*)^n$.
\end{proof}

The proof shows that the union of the lower-dimensional orbits are a divisor for $-K_M$. 

We can think of Proposition \ref{prop:nonpositive_toric} as a vast generalization of the Gauss-Bonnet theorem applied to the sphere $\Sph^2$: indeed, $\CP^1$ is a toric variety. Both Gauss-Bonnet and the proof above encode, in different ways, the Chern-Weil point of view on the characteristic class $c_1(M)$.

\ 

\noindent\textit{Remark. }The alternative argument in the proof is very much in the same spirit as Maccheroni's non-filling results \cite{Maccheroni} for minimal Lagrangian tori. In her case the contradiction concerned the possibility of interpolating between a zero and a critical point of a non-negative convex functional.

One can alternatively use the theory of holomorphic vector fields to show that a compact complex manifold with $c_1(M)<0$ has a discrete group of complex automorphisms.

\ 

\noindent\textit{Remark. }Similar facts hold for complex tori $M:=\C^n/\Lambda$. Any such torus admits K\"ahler metrics. It is clear that $c_1(M)=0$, so $\int_M c_1(M)^n=0$. It follows that any K\"ahler metric with non-positive (or non-negative) Ricci curvature must satisfy $\Ric\equiv 0$. We can relate this to our convexity results by changing coordinates so that the metric lifts to $\C^n/2\pi i \Z^n$. Integration along $\T^n$-orbits then leads to a function $G$ which is periodic and convex, thus constant. On the other hand, if for example $\Ric[p](v,v)<0$, the symmetries of $\Ric$ would imply that $\Ric$ is negative on the complex line generated by $v$, leading to a direction transverse to the orbits along which $G$ is strictly convex.

\paragraph{Immersions.}We have already discussed the fact that hyperbolic manifolds do not contain complex lines. It follows that they do not admit immersions of toric varieties or of complex tori.

A special class of hyperbolic manifolds is provided by the Schwarz-Pick-Ahlfors lemma, which shows that any manifold $M$ with negative holomorphic sectional curvature is hyperbolic. Usually, the relationships between the curvatures of a manifold and those of its submanifolds are highly non-trivial, but in this case the situation is different, allowing us to re-formulate non-immersion results in terms of curvature and convexity, as follows. 
  
\begin{prop}\label{prop:holsubs}
Let $M$ be a projective manifold. Assume it admits a K\"ahler metric with negative holomorphic sectional curvature. Then it does not admit holomorphic immersions of compact toric varieties or of complex tori.
\end{prop}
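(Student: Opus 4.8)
The plan is to reduce the non-immersion statement to the hyperbolicity machinery already assembled in the excerpt. The key structural fact is the Schwarz-Pick-Ahlfors lemma, cited just before the statement: a manifold with negative holomorphic sectional curvature is hyperbolic. Since $M$ is assumed projective (hence compact) and admits a K\"ahler metric with negative holomorphic sectional curvature, $M$ is hyperbolic. By the defining property of the Kobayashi pseudo-distance, every holomorphic map $\C\rightarrow M$ is then constant, as recalled in the Remark following Proposition \ref{prop:Liouville}.

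With hyperbolicity in hand, I would argue by contradiction. Suppose $\phi:N\rightarrow M$ is a holomorphic immersion where $N$ is either a compact toric variety or a complex torus. In either case $N$ contains a non-constant holomorphic image of $\C$: if $N$ is a complex torus $\C^k/\Lambda$, the projection of any complex line through the origin is such a map; if $N$ is a compact toric variety, its open dense orbit is biholomorphic to $(\C^*)^k$, which lifts (as discussed in the Remark after Proposition \ref{prop:Liouville}) to $\C^k$ and restricts to a non-constant holomorphic map $\C\rightarrow N$ along any complex line. Composing with $\phi$ yields a holomorphic map $\C\rightarrow M$. The point where I must be careful is ensuring this composition is itself \emph{non-constant}: this is exactly where the \emph{immersion} hypothesis on $\phi$ enters, since an immersion has injective differential and therefore cannot collapse a curve with non-vanishing velocity to a point. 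The composite $\C\rightarrow M$ is thus a non-constant entire curve, contradicting hyperbolicity.

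The main obstacle, as flagged above, is the transition from the existence of entire curves in the \emph{source} $N$ to their existence in the \emph{target} $M$. Hyperbolicity forbids entire curves in $M$, but it does not immediately forbid immersions of hyperbolic-free sources unless one knows the immersion transports a non-constant entire curve forward as a non-constant map. The immersion hypothesis supplies precisely this: a non-constant holomorphic $\psi:\C\rightarrow N$ has $\psi'\not\equiv 0$, and injectivity of $d\phi$ forces $(\phi\circ\psi)'=d\phi(\psi')\not\equiv 0$, so $\phi\circ\psi$ is non-constant. One should note that mere holomorphicity of $\phi$ (without the immersion assumption) would not suffice, which clarifies why the statement is phrased for immersions rather than arbitrary holomorphic maps.

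Finally, I would remark that the conclusion fits the broader narrative of the paper: non-immersion is a manifestation of the same curvature-controlled rigidity that, via the Liouville-type Proposition \ref{prop:Liouville}, governs the behaviour of PSH functions on $(\C^*)^n$. One could alternatively phrase the argument purely through the Kobayashi pseudo-distance, using that holomorphic maps are distance-decreasing and that $N$ carries a vanishing pseudo-distance along the relevant curves; but the entire-curve formulation above is the most direct and stays closest to the tools already developed in the excerpt.
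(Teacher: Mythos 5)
Your proof is correct, but it is not the route the paper takes for this proposition --- it is, essentially, the argument the paper itself compresses into the paragraph \emph{preceding} the statement (``hyperbolic manifolds do not contain complex lines'', hence admit no immersions of toric varieties or complex tori). The proposition is then deliberately proved by a different route, ``in terms of curvature and convexity'': the paper uses the curvature-decreasing property of holomorphic immersions to conclude that the source $N$ (compact toric variety or complex torus) would inherit a K\"ahler metric of negative holomorphic sectional curvature; observes that $N$ would also be projective; invokes the Wu--Yau theorem \cite{WuYau} to produce on $N$ a K\"ahler--Einstein metric with negative Ricci curvature; and derives a contradiction from the paper's own earlier results, namely Proposition \ref{prop:nonpositive_toric} (a compact K\"ahler toric manifold cannot have $c_1\leq 0$, in particular admits no metric with $\Ric\leq 0$) and the remark that a complex torus has $c_1=0$, hence carries no metric with $\Ric<0$. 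The comparison is instructive: your hyperbolicity argument is more elementary (no Wu--Yau) and shows that projectivity of $M$ is superfluous for the bare statement --- compact K\"ahler suffices, since compactness upgrades ``negative holomorphic sectional curvature'' to a uniform bound $\leq -\kappa<0$, which is what the Schwarz--Pick--Ahlfors/Brody mechanism actually requires (a point you should state explicitly before invoking hyperbolicity). The paper assumes projectivity precisely because Wu--Yau needs it; what its route buys is the advertised reformulation of the non-immersion phenomenon through the Ricci-curvature and convexity obstructions of Sections \ref{s:linebundles} and \ref{s:toric}, which is the unifying theme of the paper.

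One detail in your write-up is inaccurate: the claim that ``mere holomorphicity of $\phi$ would not suffice''. For these particular sources the immersion hypothesis is convenient but not essential to your argument. If $\phi$ is non-constant, then by the identity theorem $d\phi\neq 0$ on a dense open set, in particular at some point $p$ of the open orbit (resp.\ of the torus); and through any such $p$, in any prescribed tangent direction, there passes an entire curve (a translated complex line in $\C^k/\Lambda$, resp.\ a one-parameter orbit $t\mapsto \exp(tv)\cdot p$ in the dense $(\C^*)^k$-orbit). Choosing $\psi$ adaptively with velocity at $p$ not annihilated by $d\phi$ makes $\phi\circ\psi$ non-constant. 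This is consistent with the paper's remark citing \cite{HLW}, where the immersion hypothesis is indeed dropped (for projective $M$).
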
 
\begin{proof}The curvature-decreasing property of complex submanifolds implies that any such submanifold would have negative holomorphic sectional curvature. It would also be projective, so a result by Wu and Yau \cite{WuYau} then implies that it would admit a negative K\"ahler-Einstein metric, contradicting the above results.
\end{proof}

\noindent\textit{Remark. }Rational curves are a special case. Let $M$ be a complex (not necessarily projective) manifold with non-positive holomorphic sectional curvature. Any holomorphic immersion $\CP^1\rightarrow M$ would contradict Gauss-Bonnet. More generally, our arguments above apply to any holomorphic immersion $\C^*\rightarrow M$ which compactifies to a holomorphic map $\CP^1\rightarrow M$. 

Alternatively, under the stronger assumption that all sectional curvatures are non-positive, the Cartan-Hadamard theorem implies $M$ is aspherical so any rational curve would be homologically trivial: this would contradict the fact that holomorphic submanifolds are area-minimizing in their homology class.

The paper \cite{HLW}, Theorem 1.4, proves similar results without assuming the map is an immersion, but assuming $M$ is projective. 

\paragraph{$\C^*$-actions.}The notion of toric varieties can be extended to that of $T$-varieties, defined by the action of some $(\C^*)^k$. We shall concentrate only on the extreme case $k=1$. 

Assume $\C^*$ acts holomorphically on a complex manifold $M$. Choose any point $p\in M$ and consider the map $\C^*\rightarrow M$, $g\mapsto g\cdot p$. Either the infinitesimal action is trivial at $p$, in which case $p$ is a fixed point, or it is not: in this case  $p$ is a regular value and the map is an immersion, because $\C^*$ is Abelian so the isotropy groups coincide for all points of the orbit. 

Assume $M$ is compact and admits fixed points. Then Sommese \cite{Sommese} proved that every $\C^*$-orbit compactifies to a holomorphic map $\CP^1\subseteq M$. As in the previous remark, it follows that $M$ does not admit a K\"ahler metric with non-positive holomorphic sectional curvature. We summarize as follows.

\begin{prop}Let $M$ be a compact complex manifold. The following conditions are mutually exclusive.
\begin{enumerate}
\item $M$ admits a (non-trivial) holomorphic $\C^*$-action with fixed points.
\item $M$ admits a K\"ahler metric with non-positive holomorphic sectional curvature.
\end{enumerate} 
\end{prop}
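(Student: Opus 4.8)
The plan is to argue by contradiction: assume the compact complex manifold $M$ carries both a non-trivial holomorphic $\C^*$-action with a fixed point and a K\"ahler metric $g$ of non-positive holomorphic sectional curvature, and then manufacture a rational curve whose existence is forbidden by Gauss-Bonnet. This is exactly the synthesis foreshadowed in the discussion preceding the statement: Sommese's theorem supplies the rational curve, while the curvature-decreasing property of complex submanifolds converts the hypothesis on holomorphic sectional curvature into a topological obstruction.

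First I would extract a genuine orbit. Since the action is non-trivial there is a point $p$ at which the infinitesimal action does not vanish, and as recalled above the orbit map $g\mapsto g\cdot p$ is then a holomorphic immersion $\C^*\to M$. Because $M$ is compact and the action has fixed points, Sommese's theorem \cite{Sommese} applies and this orbit compactifies to a non-constant holomorphic map $\phi:\CP^1\to M$ which is an immersion on the dense $\C^*$.

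Next I would pull back the metric. On the open locus where $\phi$ is an immersion, $(\CP^1,\phi^*g)$ is a Riemann surface whose Gauss curvature equals the holomorphic sectional curvature of the image complex curve in $M$; by the curvature-decreasing property of complex submanifolds (the Gauss equation) this is bounded above by the holomorphic sectional curvature of $M$, hence is $\leq 0$. The contradiction then comes from Gauss-Bonnet, which forces the total curvature of $\phi^*g$ to record the topology of $\CP^1$:
\begin{equation*}
\frac{1}{2\pi}\int_{\CP^1}K_{\phi^*g}\,dA \;=\; \chi(\CP^1)+\sum_i b_i \;=\; 2+\sum_i b_i \;>\;0,
\end{equation*}
where the $b_i\geq 0$ are the branching orders of $\phi$. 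This is incompatible with $K_{\phi^*g}\leq 0$, which would force the left-hand side to be $\leq 0$.

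The main obstacle to make rigorous is the behaviour at the (finitely many) points of $\CP^1$ mapping to the limiting fixed points, where $\phi$ may fail to be an immersion and $\phi^*g$ degenerates. The key point to verify is that such branch points contribute the terms $b_i$ with the favourable sign, so that they only reinforce the inequality rather than threaten it: near a branch point $\phi$ has the normal form $z\mapsto z^{b_i+1}$, so $\phi^*g$ acquires a conical singularity of cone angle $2\pi(b_i+1)$, and the distributional identity $\Delta\log|z|=2\pi\delta_0$ yields the stated correction $+2\pi\sum_i b_i$ to Gauss-Bonnet. Once this local computation is in place the two hypotheses are seen to be mutually exclusive; note that non-positivity (rather than strict negativity) suffices precisely because Gauss-Bonnet, unlike the Schwarz-Pick-Ahlfors route to hyperbolicity, needs no strict sign.
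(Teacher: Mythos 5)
Your argument is correct, and its skeleton coincides with the paper's: a non-trivial action yields a point with non-vanishing infinitesimal action, hence an immersed $\C^*$-orbit; Sommese's theorem compactifies it to a non-constant holomorphic map $\CP^1\rightarrow M$; and the curvature-decreasing property of complex curves converts non-positive holomorphic sectional curvature into $K_{\phi^*g}\leq 0$ away from the degenerate points. Where you genuinely diverge is in how the two added points, at which $\phi$ may fail to be an immersion, are handled. The paper is aware of exactly this issue: its preceding remark reserves the naked Gauss-Bonnet argument for honest immersions $\CP^1\rightarrow M$, and for an immersion $\C^*\rightarrow M$ that merely compactifies it instead falls back on its PSH-to-convexity machinery --- the circle-length function $G$ of the pulled-back metric on $\C^*$ is positive and convex in $\log r$ (Riesz/Proposition \ref{prop:G} applied to the volume density, whose logarithm is PSH when $K\leq 0$), yet tends to zero at both ends by continuity of the compactified map, which is impossible for a positive convex function on $\R$. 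You instead stay on $\CP^1$ and prove a branched Gauss-Bonnet formula, checking via the conical-singularity computation that branch points contribute $+\sum_i b_i$ with the favourable sign. Both resolutions are valid; yours is self-contained classical surface theory, while the paper's avoids any local analysis at the singular points and showcases its central theme of trading pointwise curvature conditions for integral convexity statements. One small looseness in your write-up: for a map into a higher-dimensional target one cannot literally put $\phi$ in the normal form $z\mapsto z^{b_i+1}$; what the conical computation actually needs, and what does hold, is that near a point where $d\phi$ vanishes to order $b_i$ one has $\phi^*g=|z|^{2b_i}\,\lambda\,|dz|^2$ with $\lambda$ smooth and positive, so that $K_{\phi^*g}\,dA$ extends smoothly (the harmonic term $\log|z|^{2b_i}$ drops out of the curvature away from the puncture) and the distributional identity $\Delta\log|z|=2\pi\delta_0$ produces exactly the correction $2\pi\sum_i b_i$. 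With that rephrasing your proof is complete.
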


\noindent\textit{Example. }Assume given a $\T^1$-action on a compact complex manifold. Let $\tilde X$ denote a fundamental vector field. Then $J\tilde X$ defines a complete vector field, allowing us to complexify the $\T^1$-action, thus obtaining a $\C^*$-action. 

In this context two main types of situations may arise. Assume for example $M:=\C/\Lambda$ with $\Lambda:=<1,\tau>$. Notice that $\partial x$ generates a free $\T^1$-action. If $\tau=i$, the complexified action is transitive. Indeed, it coincides with the action of the torus on itself. Alternatively, choosing $\tau$ appropriately, we may assume $\partial y$ corresponds to a free $\R$-action with a dense orbit. We thus obtain a transitive $\C^*$-action whose isotropy group $H$ is not closed, so there is no relation between the topology of the torus and the topology of the quotient $\C^*/H$. 

In both cases the action is non-effective. However, it is fixed-point free so Sommese's result does not apply, neither action compactifies to $\CP^1$ and indeed the torus admits flat metrics (thus non-positive curvature).

\ 

\noindent\textit{Remark. }One of the simplest ways of ensuring the existence of fixed points is by starting with a Hamiltonian $\T^1$-action, for some K\"ahler structure. Compactness implies that this action extends to a $\C^*$-action, and any critical point of the Hamiltonian function will generate a fixed point. More generally, Gilligan et al. \cite{Gilligan} show that the closure of the complexified orbits of any Hamiltonian action contains only orbits of strictly lower dimension, as in the case of toric varieties.

\section{Convexity of volume functionals}\label{s:vol_convexity}

Section \ref{s:PSH} admits a nice geometric interpretation in terms of volume functionals and the Ricci tensor. Explaining this will require a few background notions.

\paragraph{Totally real geometry.}Each canonical $n$-torus $L\subseteq (\C^*)^n$ is an example of a \textit{totally real} (TR) submanifold, defined by the condition $T_pL\cap J(T_pL)=\{0\}$. In dimension $n$ it follows that $T_pL\oplus J(T_pL)=T_pM$, so any real basis $\{v_1,\dots,v_n\}$ of $T_pL$ is also a complex basis of $T_pM$. 

A key feature of $n$-dimensional TR submanifolds is that orientability implies that $K_{M|L}$ is trivial: indeed, any non-vanishing section of $\Lambda^n(L)$ complexifies to a non-vanishing section of $\Lambda^{n,0}M_{|L}$. This suggests the possibility of a link between TR geometry and the geometry of $K_M$. This idea was developed in \cite{LP1}, \cite{LP2}. Here we will assume that $L$ is an oriented TR submanifold in a complex manifold $M$ endowed with a Hermitian metric $h$ on $K_M^*$. Given $p\in L$, $\{v_1,\dots,v_n\}$ will denote a positive basis of $T_pL$.
 
The starting point is the simple observation that the standard volume form $\vol_g:=\frac{v_1^*\wedge\dots\wedge v_n^*}{|v_1^*\wedge\dots\wedge v_n^*|_g}$ on $L$ does not make use of the TR condition. One should thus not expect the standard volume functional to have special properties on TR submanifolds. If however we complexify each dual element $v_i^*$ to become an element in $\Lambda^{1,0}_pM$, we can define the $(n,0)$-form on $L$
\begin{equation*}
\Omega_J:=\frac{v_1^*\wedge\dots\wedge v_n^*}{|v_1^*\wedge\dots\wedge v_n^*|_h}=|v_1\wedge\dots\wedge v_n|_h\cdot (v_1^*\wedge\dots\wedge v_n^*)\in {K_M|L},
\end{equation*}
then restrict it to a real-valued $n$-form $\vol_J:=\Omega_{J|TL}\in\Lambda^n(L)$. The expression above is independent of the basis. More invariantly, $\Omega_J=\sigma/|\sigma|_h$, where $\sigma$ denotes (the complexification of) any real volume form on $L$.

The corresponding \textit{$J$-volume} functional is $\Vol_J(L):=\int_L\vol_J$. 

Notice that we have defined this functional only for $n$-dimensional oriented totally real submanifolds. Here, it is strictly positive. However, the definition shows that it admits a continuous extension to zero when the vectors $v_i$ tend towards becoming $\C$-linearly dependent, ie when the limiting submanifold contains complex directions or has lower dimension.

When $h$ is induced from $M$ then $|v_1\wedge\dots\wedge v_n|^2_h=\det h(v_i,v_j)$ so
\begin{equation*}
\Omega_J=\sqrt{\det h(v_i,v_j)}\cdot(v_1^*\wedge\dots\wedge v_n^*).
\end{equation*}

Now assume $M$ is K\"ahler, so that $h=g-i\omega$. We then encounter a special case of the TR condition: the $n$-dimensional \textit{Lagrangian submanifolds}, defined by condition $\omega_{|TL}\equiv 0$, equivalently $T_pL\perp J(T_pL)$. In this case $h(v_i,v_j)=g(v_i,v_j)$, so $\vol_J=\vol_g$ and $\Vol_J$ coincides with the standard volume functional. For $n=1$, ie curves in Riemann surfaces, this condition is automatically satisfied.

\ 

\noindent\textit{Remark. }The Lagrangian condition has many interesting features, and we will come back to it in Section \ref{s:symplecticgeometry}. However, it is too restrictive to provide an appropriate framework for a geometric interpretation of Section \ref{s:PSH}. We will see below that totally real submanifolds and the $J$-volume fit the need perfectly.

\paragraph{Convexity of volume functionals.}We can now reformulate Section \ref{s:PSH} in geometric terms. 

Set $M:=\C^n/2\pi i\Z^n\simeq (\C^*)^n$. Thinking of it as a complex Lie group shows that the anti-canonical bundle is holomorphically trivial; a section $\sigma$ is generated by the vector fields $v_i:=\partial y_i\simeq\partial\theta_i$, which are holomorphic as they correspond to the holomorphic action of $\T^n$ on $(\C^*)^n$. Any metric $h$ on $K_M^{-1}$ defines the function $H=h(\sigma,\sigma)=|v_1\wedge\dots\wedge v_n|^2_h$. Its properties are controlled by the curvature form $\rho$, as above.

The key point is that, since the vectors $v_i$ are tangent to the canonical $n$-tori $L$, the same function reappears when we calculate the density of the $J$-volume, restricted to such tori: 
\begin{align*}
\vol_J&=\sqrt{H}\,dy\simeq \sqrt{H}\,d\theta\\
&=\sqrt{\det h(\partial y_i,\partial y_j)}\,dy\simeq\sqrt{\det h_{i\bar j}}\,d\theta,
\end{align*}
where the bottom expression holds if $h$ is induced from $M$. If $h$ is $\T^n$-invariant then $H$ is independent of the variable $y$.

\begin{thm}\label{thm:TR_convexity}
Set $M:=\C^n/2\pi i\Z^n\simeq(\C^*)^n$. Let $h$ denote a Hermitian metric on $K_M^{-1}$ (induced for example by a metric on $M$) and $\rho$ the corresponding curvature.

If $\rho\leq 0$ then the $J$-volume functional $\Vol_J$, evaluated on the canonical $n$-tori, is convex.

If the metric is $\T^n$-invariant then the condition $\rho\leq 0$ ($\rho\geq 0$) is equivalent to the convexity (concavity) of $\log\Vol_J$, evaluated on the canonical $n$-tori.
\end{thm}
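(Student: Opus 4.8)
The plan is to identify $\Vol_J$, restricted to the canonical $n$-tori, with the integral average of a suitable PSH function, and then invoke Propositions \ref{prop:F}--\ref{prop:G}. Since $\vol_J=\sqrt{H}\,d\theta$ on each canonical torus $L=L(x)$ (with $x_i=\log r_i$), we have $\Vol_J(L(x))=\int_{\T^n}\sqrt{H}\,d\theta$, which agrees up to the positive constant $(2\pi)^n$ with the integral average $G$ of the function $g:=\sqrt{H}$ on $(\C^*)^n$. As scaling by a positive constant preserves both convexity and concavity, it suffices to analyze $G$ in the variables $x=\log r$.

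For the first statement I would argue as follows. By Section \ref{s:linebundles}, $\rho\le 0$ is equivalent to $\log H$ being PSH. The key observation is that $\sqrt{H}=\exp(\tfrac12\log H)$ is the post-composition of the PSH function $\log H$ with the convex increasing function $t\mapsto e^{t/2}$; since such compositions are again PSH (a standard fact), $g=\sqrt{H}$ is PSH on $(\C^*)^n$. Proposition \ref{prop:G} then shows that $G$, hence $\Vol_J$, is convex in the variables $\log r_i$.

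For the second statement, $\T^n$-invariance of $h$ forces $H=H(x)$ to be independent of the angular variables, so the integration over $\T^n$ is trivial and $\Vol_J(L(x))=(2\pi)^n\sqrt{H(x)}$. Hence $\log\Vol_J=\text{const}+\tfrac12\log H$, and convexity (resp.\ concavity) of $\log\Vol_J$ in $x$ is equivalent to convexity (resp.\ concavity) of $\log H$ in $x$. Because $\log H$ is $y$-independent, the special case recorded in Section \ref{s:PSH} identifies its PSH property with convexity in $x$, and likewise identifies the PSH property of $-\log H$ with concavity of $\log H$. Combined with the equivalences $\rho\le 0\iff\log H$ PSH and $\rho\ge 0\iff -\log H$ PSH from Section \ref{s:linebundles}, this yields both halves of the claimed equivalence.

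I expect the only genuinely non-routine step to be the composition argument $\log H\text{ PSH}\Rightarrow\sqrt{H}\text{ PSH}$. It is this step that forces us to integrate $\sqrt{H}$ rather than $\log H$, and it accounts for the asymmetry between the two statements: in the general, non-invariant case, integration over $\T^n$ destroys the pointwise information carried by $\log H$, so one can only deduce convexity of $\Vol_J$ itself and not of $\log\Vol_J$. The sharper equivalence involving $\log\Vol_J$ is recovered precisely when $\T^n$-invariance renders the integral trivial.
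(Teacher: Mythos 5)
Your proposal is correct and takes essentially the same route as the paper: the composition step $\log H$ PSH $\Rightarrow \sqrt{H}=\exp\bigl(\tfrac12\log H\bigr)$ PSH followed by Proposition \ref{prop:G} is exactly the paper's argument for the first claim, and your treatment of the invariant case via the $y$-independent special case of Section \ref{s:PSH} matches the paper's modification for the equivalence involving $\log\Vol_J$. Your closing observation about why the non-invariant case only yields convexity of $\Vol_J$ itself is likewise consistent with the paper's reasoning.
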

\begin{proof}The curvature assumption implies that $-i\partial\bar\partial \log H$ is non-positive so $\log H$, thus $\frac{1}{2}\log H=\log\sqrt{H}$, is PSH. It follows that $\sqrt{H}=\exp\circ (\log\sqrt{H})$ is also PSH. 

Applying Proposition \ref{prop:G} we find that $\Vol_J(r)=\iint \sqrt{H}\,d\theta$ is convex with respect to the variable $\log r$, ie $\Vol_J(x)=\iint \sqrt{H}\,dy$ is convex with respect to the variable $x$.

If $H$ is independent of $y$ we can modify this proof as follows. As before, the non-positive curvature condition means that $\log\sqrt{H}$ is PSH. Invariance implies this is equivalent to the convexity of $x\mapsto\iint\log\sqrt{H}\,dy=(2\pi)^n\log\sqrt{H}$, thus of $x\mapsto \log\Vol_J(x)=\log((2\pi)^n\sqrt{H})$. 

The situation for non-negative curvature is analogous.
\end{proof}

Notice that, contrary to the convex case, concavity of $\log\Vol_J$ does not necessarily imply concavity of $\Vol_J$: an example is provided by the function $-x^2=\log(e^{-x^2})$. 

\ 

\noindent\textit{Remark. }In the $\T^n$-invariant case, the function $\log\sqrt H$ (thus $\sqrt H$) descends to the orbit space $\R^n$ and the theorem characterizes its convexity in terms of the Ricci curvature. This is somewhat analogous to the already-mentioned relationship between the Ricci curvature and the density of the ambient volume form $\vol_M$ along geodesics in $M$.

\ 

\noindent\textit{Remark. }The space of canonical TR $n$-tori in $\C^n/2\pi i\Z^n\simeq(\C^*)^n\subseteq M$ is geodesically connected in the sense of \cite{LP2}. Theorem \ref{thm:TR_convexity} is thus related to Theorem 5.10 in \cite{LP2}, where computations are however performed in terms of the Levi-Civita connection.

\ 

\noindent\textit{Remark. }Theorem \ref{thm:TR_convexity} shows that TR submanifolds are well-positioned to encode the geometry of the Ricci curvature. In particular, we obtain the following interpretation of non-positive Ricci curvature.

Let $M$ be a K\"ahler manifold and $p\in M$. Assume $Ric\leq0$ in a neighbourhood of $p$. Given any set of local coordinates, we can assume the domain is $\T^n$-invariant. Theorem \ref{thm:TR_convexity} shows that the $J$-volume of the canonical $n$-tori defined by these coordinates is convex, and Proposition \ref{prop:monotonicity} shows that it is monotone with respect to each $r_i$. When any $r_i=0$ the torus collapses to a lower dimensional submanifold and the $J$-volume is zero. 

\section{Hamiltonian actions}\label{s:symplecticgeometry}
Recall the following notion. Let $(M,\omega)$ be a symplectic manifold endowed with a right $G$-action. Let $\mathfrak{g}$ denote the corresponding Lie algebra. Given any $X\in\mathfrak{g}$, let $\tilde{X}$ denote the corresponding fundamental vector field on $M$. We say that the action is \textit{Hamiltonian} if it admits a \textit{moment map}, ie a map $\mu:M\rightarrow\mathfrak{g}^*$ such that (i) for all $X\in\mathfrak{g}$, $d\mu\cdot X=\omega(\tilde{X},\cdot)$, (ii) $\mu$ is equivariant with respect to the (right) coadjoint action of $G$ on the dual Lie algebra. In particular, (i) implies that $\mathcal{L}_{\tilde X}\omega\equiv 0$ so $\omega$ is $G$-invariant, while (ii) implies that $\mu\cdot [X,Y]\equiv \omega(\tilde X,\tilde Y)$.

Here we are interested in the case $G:=\T^n$: it is Abelian so the coadjoint action is trivial, $\mu$ is invariant and the $\T^n$-orbits are Lagrangian. In this setting we can reformulate Theorem \ref{thm:TR_convexity} as follows.

\begin{thm}\label{thm:Lagr_convexity}
Let $M:=\C^n/2\pi i\Z^n\simeq(\C^*)^n$ be endowed with a K\"ahler metric such that the $\T^n$-action is Hamiltonian. Let $\Vol$ denote the standard volume functional restricted to the set of canonical $n$-tori in  $\C^n/2\pi i\Z^n\simeq(\C^*)^n\subseteq M$. Then:
\begin{itemize}
\item The condition $\Ric\leq 0$ is equivalent to the convexity of $\log\Vol$. 

It implies that $\Vol$ is convex.
\item The condition $Ric\geq 0$ is equivalent to the concavity of $\log\Vol$.

It implies that $1/\Vol$ is convex.
\end{itemize}
In particular, if $\Ric=0$ then $\log \Vol$ is linear. 
\end{thm}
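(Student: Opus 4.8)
The plan is to deduce Theorem~\ref{thm:Lagr_convexity} directly from Theorem~\ref{thm:TR_convexity}, using the Hamiltonian hypothesis only to verify that the general $J$-volume functional $\Vol_J$ of the earlier theorem collapses to the ordinary Riemannian volume functional $\Vol$ in the present situation. The key observation is recorded already in the ``Totally real geometry'' paragraph of Section~\ref{s:vol_convexity}: for an $n$-dimensional submanifold, the Lagrangian condition $\omega_{|TL}\equiv 0$ forces $h(v_i,v_j)=g(v_i,v_j)$, so that $\vol_J=\vol_g$ and hence $\Vol_J=\Vol$. Thus the entire content of the theorem reduces to two things: first, that the $\T^n$-orbits here really are Lagrangian, and second, that the metric $h$ induced on $K_M^{-1}$ is $\T^n$-invariant, so that the sharper equivalences (rather than just the one-sided implication) in Theorem~\ref{thm:TR_convexity} are available.

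First I would establish that the canonical $n$-tori are Lagrangian. By the discussion opening Section~\ref{s:symplecticgeometry}, for $G=\T^n$ the group is Abelian, so the coadjoint action is trivial and property (ii) of the moment map gives $\mu\cdot[X,Y]\equiv\omega(\tilde X,\tilde Y)$; since $[X,Y]=0$ for all $X,Y$ in the Abelian Lie algebra, this yields $\omega(\tilde X,\tilde Y)\equiv 0$. Because the fundamental vector fields $\tilde X$ span the tangent space to each orbit, this is precisely the statement that every $\T^n$-orbit is Lagrangian. Next I would note that the Hamiltonian condition also forces $\omega$, hence the whole K\"ahler structure and the induced metric $h$ on $K_M^{-1}$, to be $\T^n$-invariant: property (i) gives $\mathcal{L}_{\tilde X}\omega\equiv 0$, and together with invariance of $J$ (the orbits being holomorphically acted on by the complexified torus, as in Section~\ref{s:PSH}) this makes $H=h(\sigma,\sigma)$ independent of the angular variables $y$, which is exactly the hypothesis needed to invoke the ``$\T^n$-invariant'' half of Theorem~\ref{thm:TR_convexity}.

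With these two points in hand the conclusion is immediate. Identifying $\Vol_J$ with $\Vol$ and applying Theorem~\ref{thm:TR_convexity}, the condition $\rho\leq 0$ becomes equivalent to the convexity of $\log\Vol$, and $\rho\geq 0$ to its concavity; since the K\"ahler identity $\rho(\cdot,J\cdot)=\Ric$ recalled in Section~\ref{s:linebundles} shows that the sign of $\rho$ is the sign of $\Ric$, these are exactly the stated equivalences. The implication ``$\log\Vol$ convex $\Rightarrow\Vol$ convex'' follows because $\Vol=\exp(\log\Vol)$ and the exponential is convex and increasing, so it preserves convexity under composition. The implication ``$\log\Vol$ concave $\Rightarrow 1/\Vol$ convex'' follows because $1/\Vol=\exp(-\log\Vol)$ and $-\log\Vol$ is then convex, so again composing with the convex increasing exponential preserves convexity. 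Finally, if $\Ric=0$ then $\rho=0$, so $\log\Vol$ is simultaneously convex and concave, hence affine, i.e.\ linear.

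The step I expect to require the most care is not any single inequality but the clean identification $\Vol_J=\Vol$ together with the verification that the metric is genuinely $\T^n$-invariant in the sense demanded by Theorem~\ref{thm:TR_convexity}. The one-sided statement of that theorem ($\rho\leq 0\Rightarrow\Vol_J$ convex) holds without invariance, but the biconditional characterizations of the convexity of $\log\Vol_J$ used here rely on $H$ being $y$-independent; so the main obstacle is to argue carefully that the Hamiltonian hypothesis delivers full invariance of $h$ on $K_M^{-1}$, and not merely invariance of $\omega$. This is where I would be most attentive, ensuring that invariance of the symplectic form, the complex structure, and hence the Hermitian metric $h=g-i\omega$ all propagate to the induced function $H$.
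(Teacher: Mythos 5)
Your proposal is correct and follows exactly the route the paper intends: the theorem is stated there as a direct reformulation of Theorem~\ref{thm:TR_convexity}, justified by the same observations you make --- the Abelian moment-map identity $\mu\cdot[X,Y]\equiv\omega(\tilde X,\tilde Y)$ forces the orbits to be Lagrangian (so $\Vol_J=\Vol$), while property (i) plus holomorphy of the action gives $\T^n$-invariance of the K\"ahler data and hence of $H$, unlocking the biconditional half of Theorem~\ref{thm:TR_convexity}, with $\rho(\cdot,J\cdot)=\Ric$ translating the sign conditions. Your explicit attention to the invariance of $h$ on $K_M^{-1}$ and the elementary composition arguments for $\Vol=\exp(\log\Vol)$ and $1/\Vol=\exp(-\log\Vol)$ fill in details the paper leaves implicit, and they are all sound.
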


\noindent\textit{Example. }Consider the flat metric on $\C^*$. The $\Sph^1$-orbits have length $2\pi r$, so setting $r=e^t$ we find that $\log\Vol$ is linear wrt the variable $t$, as claimed.

\ 

\noindent\textit{Example. }Recall that, as a complex manifold, $(\C^*)^n$ is a Stein space so its higher Dolbeault cohomology groups vanish. In particular, $H^{1,1}_{\bar\partial}((\C^*)^n)=0$. Assume $\omega$ is $d$-exact. It can then be written as $\omega=d(\alpha+\bar\alpha)$, with $\alpha\in \Lambda^{1,0}((\C^*)^n)$ and $\bar\partial\bar\alpha=0$. We can now solve the problem $\bar\partial h=\bar\alpha$, ultimately writing $\omega=i\partial\bar\partial g$, for some real function $g$. 

Recall that $i\partial\bar\partial$ can alternatively be written as $dd^c$, where $d^cg:=-dg\circ J$. Let us further assume $\omega$ is $\T^n$-invariant. We can then choose $g$ to be invariant and set
\begin{equation*}
\mu:M\rightarrow (\R^n)^*, \ \ \mu\cdot X:=-\d^c g (\tilde{X}).
\end{equation*}
The $\T^n$-invariance and Cartan's formula show that 
\begin{equation*}
0=\mathcal{L}_{\tilde X}d^c g=d\iota_{\tilde X}d^c g+\iota_{\tilde X}dd^c g=-d(\mu\cdot X)+\iota_{\tilde X}\omega,
\end{equation*}
This shows that $d(\mu\cdot X)(\cdot)=\omega(\tilde X,\cdot)$. It is also invariant, so the $\T^n$-action is Hamiltonian. 

\ 

The above result can be applied to characterize positive Ricci curvature and to study the existence/uniqueness of minimal submanifolds, ie of critical points of the volume functional.

\begin{cor}\label{cor:toric_convexity}
Let $M$ be a smooth toric variety endowed with a K\"ahler structure such that the $\T^n$-action is Hamiltonian. Then, on the open orbit, $\Ric>0$ iff $\log\Vol$ is strictly concave. 

In this case $1/\Vol$ is strictly convex and, when $M$ is compact, $\Vol$ has a unique critical point corresponding to a global maximum. It is a minimal Lagrangian $n$-torus.
\end{cor}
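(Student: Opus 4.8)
The plan is to derive the corollary from the strict refinement of Theorem~\ref{thm:Lagr_convexity}, supplemented by the boundary behaviour of $\Vol$ coming from compactness and by a symmetry argument for minimality. First I would note that the hypotheses place us in the $\T^n$-invariant case: since $\T^n\subseteq(\C^*)^n$ acts holomorphically it preserves $J$, and the Hamiltonian assumption makes it preserve $\omega$, hence $g=\omega(\cdot,J\cdot)$, so the action is by isometries and the density $H$ descends to the orbit space $\R^n$ with coordinate $x=\log r$. By Section~\ref{s:linebundles} we have $\Ric=L_{-\log H}$ as symmetric $2$-tensors, so $\Ric>0$ is equivalent to $-\log H$ being strictly PSH, which in the invariant case means $-\frac{1}{2}\log H=-\log\sqrt H$ is strictly convex on $\R^n$. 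Since $\log\Vol$ differs from $\log\sqrt H$ only by the constant $n\log(2\pi)$, this is exactly strict concavity of $\log\Vol$. Finally $1/\Vol=\exp(-\log\Vol)$ is the composition of the strictly increasing convex function $\exp$ with the strictly convex function $-\log\Vol$, hence strictly convex.

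Assume now $M$ is compact. Each canonical torus $L(x)$ is a $\T^n$-orbit, and as $|x|\to\infty$ in $\R^n$ one approaches the boundary divisor $M\setminus(\C^*)^n$: since $M$ is compact its fan is complete, so every direction to infinity corresponds, via a one-parameter subgroup, to a lower-dimensional boundary orbit along which the tori collapse. By the continuous extension of the $J$-volume to zero on degenerate tori (as used in Proposition~\ref{prop:nonpositive_toric}) this gives $\Vol\to 0$, hence $1/\Vol\to+\infty$, in every direction. A strictly convex coercive function on $\R^n$ has a unique critical point, namely its global minimum; transferring back, $\Vol$ has a unique critical point on the open orbit, a global maximum realised by some $L(x_0)$ with $x_0\in\R^n$.

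It remains to identify $L(x_0)$ as a minimal Lagrangian torus. It is Lagrangian because the Hamiltonian $\T^n$-orbits are Lagrangian (Section~\ref{s:symplecticgeometry}). For minimality I would argue by symmetric criticality, made concrete as follows. The normal bundle of $L(x_0)$ is spanned by the $\partial x_i$, and the $\T^n$-invariant normal variations are precisely the translations $x_0\mapsto x$; thus $x_0$ being critical for $x\mapsto\Vol(L(x))$ says the first variation vanishes along each $\partial x_i$. Because the action is by isometries the mean curvature vector $\mathcal H$ is $\T^n$-invariant, so $\langle\mathcal H,\partial x_i\rangle$ is constant along $L(x_0)$; the first-variation formula $\delta\Vol(\partial x_i)=-\int_{L(x_0)}\langle\mathcal H,\partial x_i\rangle\,\vol_g$ then forces $\langle\mathcal H,\partial x_i\rangle=0$ for each $i$, and since the $\partial x_i$ span the normal bundle we conclude $\mathcal H\equiv 0$.

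The step I expect to be the main obstacle is the coercivity claim: one must verify that $\Vol\to 0$ as $|x|\to\infty$ in all directions, not merely along the coordinate axes, which is exactly where completeness of the fan (equivalently, compactness of $M$) is indispensable — consistent with the introductory theorem's assertion that $\Ric<0$ forces non-compactness. The minimality argument is more routine, but it genuinely uses that the metric $g$, and not merely $\omega$, is $\T^n$-invariant.
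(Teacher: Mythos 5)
Your proof is correct and follows essentially the same route as the paper: the first statement is the strict case of Theorem~\ref{thm:Lagr_convexity} applied on the open orbit, the collapse of the tori at the boundary together with strict convexity/concavity yields existence and uniqueness of the critical point, and minimality is obtained by symmetric criticality. The only differences are expository: where the paper simply cites Palais' symmetry principle and states that compactness implies $\Vol$ has a maximum, you unpack these into an explicit first-variation argument using the $\T^n$-invariance of the mean curvature vector and a coercivity argument for $1/\Vol$ --- both are correct fillings-in of the same steps.
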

\begin{proof} We have already proved the first statement. The orbits in the compactification of $(\C^*)^n$ have strictly lower dimension so their volume is zero. Compactness implies that $\Vol$ has a maximum. The critical points of $\Vol$ and $\log\Vol$ coincide. Concavity implies the latter has a unique such point. In theory the corresponding Lagrangian submanifold is critical only within the class of $\T^n$-orbits, but by the Palais' symmetry principle it is automatically critical with respect to all variations so it is minimal.
\end{proof}

The statement concerning the uniqueness of critical points first appeared in \cite{OSD}, where it was proved using a different set of coordinates (action-angle coordinates). In that coordinate system, however, $\Vol$ has no convexity properties.

\ 

\noindent\textit{Example. }Let $M$ be a compact Fano smooth toric variety, ie $c_1(M)>0$. Symmetrization preserves cohomology classes, so Yau's theorem shows that any $\T^n$-invariant positive form in $c_1(M)$ is the Ricci form of some $\T^n$-invariant  K\"ahler metric representing $c_1(M)$. Futaki \cite{Futaki} showed that, given any such metric, the $\T^n$-action is Hamiltonian. We can thus apply Corollary \ref{cor:toric_convexity}. Invariant K\"ahler-Einstein metrics \cite{WangZhu} are a special case.

\ 
 
Appropriate boundary conditions, ensuring the properness of the volume functional, would analogously prove the existence and uniqueness of minimal orbits in the case $\Ric<0$.

\bibliographystyle{amsplain}
\bibliography{PSH_biblio}

\end{document}